\documentclass[12pt]{article}

\usepackage{amsfonts}
\usepackage{amsmath, amsthm}
\usepackage{amssymb}
\usepackage{indentfirst}
\usepackage{graphicx}
\graphicspath{}
\DeclareGraphicsExtensions{.pdf,.png,.jpg}
\textwidth190mm \textheight250mm \voffset 10pt \hoffset-75pt \topmargin-35mm

\usepackage[english]{babel}
\usepackage{latexsym}
\renewcommand{\le}{\leqslant}
\renewcommand{\ge}{\geqslant}

\newcommand{\F}{\mathcal F}
\newcommand{\FF}{\mathbb F}
\newcommand{\A}{\mathcal A}
\newcommand{\fG}{\mathfrak{G}}

\newtheorem{theorem}{Theorem}[section]

\newtheorem{lemma}[theorem]{Lemma}

\newtheorem{claim}[theorem]{Claim}

\begin{document}
\title{Chromatic numbers of Kneser-type graphs}
\author{Dmitriy Zakharov}
\date{}
\maketitle
\begin{abstract}
Let $G(n, r, s)$ be a graph whose vertices are all $r$-element subsets of an $n$-element set, in which two vertices are adjacent if they intersect in exactly $s$ elements. In this paper we study chromatic numbers of $G(n, r, s)$ with $r, s$ being fixed constants and $n$ tending to infinity. Using a recent result of Keevash on existence of designs we deduce an inequality $\chi(G(n, r, s)) \le (1+o(1))n^{r-s} \frac{(r-s-1)!}{(2r-2s-1)!}$ for $r > s$ with $r, s$ fixed constants. This inequality gives sharp upper bounds for $r \le 2s+1$.  Also we develop an elementary approach to this problem and prove that $\chi(G(n, 4, 2)) \sim \frac{n^2}{6}$ without use of Keevash's results. 

Some bounds on the list chromatic number of $G(n, r, s)$ are also obtained.
\end{abstract}

\section{Introduction}

Let $G(n, r, s)$ be a graph whose vertices are all $r$-element subsets of an $n$-element set, in which two vertices are adjacent if they intersect in exactly $s$ elements. If we let $s = 0$ then we obtain a classical family of graphs, namely Kneser graphs $KG(n, r)$. On the other hand, if $s=r-1$ then we obtain the sequence of Johnson graphs $J(n, r)$. So, $G(n, r, s)$ may be regarded as a generalization of both Kneser and Johnson graphs. These graphs have many applications in combinatorial geometry (\cite{FW}, \cite{KK}, \cite{CKR}), coding theory (\cite{WS}, \cite{R}) and Ramsey theory (\cite{Nagy}). In many applications it is important to estimate the chromatic number of $G(n, r, s)$. Recall that the chromatic number $\chi(G)$ of a graph $G$ is the minimal number $k$ such that the vertices of $G$ can be colored in $k$ colors in such a way that there is no monochromatic edge. In this paper we study upper bounds on $\chi(G(n, r, s))$ where $r > s$ are fixed constants and $n$ tends to infinity (see \cite{BKK} for known results, also see \cite{P}, \cite{Z}). Some recent related results can be found in \cite{KBC}, \cite{KKup}, \cite{Kup}, \cite{C}, \cite{SR}, \cite{ZR}.

Denote by $[n] = \{1, \ldots, n\}$ the set of first $n$ positive integers, by ${X \choose r}$ the set of all $r$-element subsets of $X$. It is convenient to assume that the set of vertices of $G(n, r, s)$ coincides with ${[n] \choose r}$. 

The chromatic number $\chi(G(n, r, s))$ behaves quite differently depending on the relation between $r$ and $s$:
if $r > 2s+1$ then $\chi(G(n, r, s)) = \Theta(n^{s+1})$ and if $r \le 2s+1$ then $\chi(G(n, r, s)) = \Theta(n^{r-s})$. The reason is that there are two different types of independent sets\footnote{Recall that the set of vertices of a graph is independent if any two of its vertices are not adjacent. The independence number $\alpha(G)$ is the size of the largest independent set of $G$. For any graph $G$ one has $\chi(G) \ge |V|/\alpha(G)$.} in these graphs. 
The first example of an independent set in $G(n, r, s)$ is a {\it star}: the family of sets containing a fixed $(s+1)$-set. It has cardinality ${n-s-1 \choose r-s-1}$ which is asymptotically  $\frac{n^{r-s-1}}{(r-s-1)!} = \Theta(n^{r-s-1})$ if $r, s$ are fixed. It was proved by Frankl and F{\" u}redi \cite{FF} that if $r > 2s+1$ and $n$ is sufficiently large then the star is the maximal independent set in $G(n, r, s)$. Recently the result has been significantly extended in \cite{KL} to the regime where $s$ is fixed and $C < r < n / C$ for some absolute constant $C$.

In the case $r \le 2s+1$ the maximal independent sets can not be classified is any reasonable way. All known constructions come from design theory (the connection will be indicated in Section \ref{s1}) and explicit constructions are known in very special cases only. In particular, R{\" o}dl \cite{Ro}, using a probabilistic method, showed that $\alpha(G(n, r, s)) \ge (1+o(1))n^s \frac{(2r-2s-1)!}{r!(r-s-1)!} = \Theta(n^s)$ (see \cite{BKK} for more details). The matching upper bound is known if $(r-s)$ is a power of a prime \cite{FW} and it is a major open problem to obtain the same upper bound without the  assumption that $(r-s)$ is a power of a prime.

The intermediate case $r = 2s+1$ is of particular interest. For this choice of parameters both star and design-type constructions give lower bounds of the same order of magnitude $\Theta(n^s)$, so one may wonder which construction gives a better estimate. Miraculously, both constructions result in exactly the same bound $\alpha(G(n, 2s+1, s)) \ge (1+o(1))\frac{n^s}{s!}$ (although, the $o$-term is slightly better in the design-type construction).

It is not difficult to see that the chromatic number of $G(n, r, s)$ has the order of magnitude $\Theta(\frac{n^r}{ \alpha(G(n, r, s))})$ which explains the aforementioned dichotomy. 
The problem now is to determine the constant $C$ (depending on $r, s$) such that $\chi(G(n, r, s)) \sim Cn^{\min\{s+1, r-s\}}$. 

For $r > 2s+1$ all known upper bounds on $\chi(G(n, r, s))$ are obtained using Tur{\'a}n numbers (see \cite{BKK}, \cite{S}). The author is unaware of any improvements of the trivial inequality $\chi(G(n, r, s)) \ge {n \choose r}/\alpha(G(n, r, s)) \sim n^{s+1} \frac{(r-s-1)!}{r!}$ for $r > 2s+1$, except for the case $s =0$ in which the chromatic number is known exactly and equals to $\chi(G(n, r, 0)) = n - 2r+ 2,~(n\ge 2r)$ (this is a celebrated result of Lov{\' a}sz \cite{L}).

In this paper we will consider the region $r \le 2s+1$. The simplest upper bound on the chromatic number of $G(n, r, s)$ is this case is the maximal degree bound: $\chi(G(n, r, s)) \le \Delta(G(n, r, s)) + 1 \sim n^{r-s} \frac{r!}{s! ((r-s)!)^2}$ which already gives the correct order of magnitude. Some non-trivial estimates were obtained by the author in \cite{Z}, for instance, $\chi(G(n, r, s)) \le (1+o(1)) n^{r-s}$ which improves the maximal degree bound if $(r-s)$ is less than $\sqrt{s}$. The first result of the present paper is the following sharp result.

\begin{theorem}\label{cor0}
Let $r > s$. Then $\chi(G(n, r, s)) \le (1+o(1))n^{r-s} \frac{(r-s-1)!}{(2r-2s-1)!}$ as $n \rightarrow \infty$.
\end{theorem}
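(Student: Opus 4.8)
The plan is to reduce the bound to a (near-)decomposition statement and then invoke Keevash's design existence theorem. Write $q=r-s\ge 1$ throughout, and recall the elementary fact that $\chi(G)\le N$ whenever $V(G)$ admits a \emph{cover} (not necessarily a partition) $I_1,\dots,I_N$ by independent sets: colour each vertex by the least index of a set containing it, so every colour class lies inside some $I_j$. Hence it suffices to cover $\binom{[n]}{r}$ by $(1+o(1))n^{r-s}\frac{(r-s-1)!}{(2r-2s-1)!}$ sets containing no two $r$-subsets meeting in exactly $s$ points. The key observation is that this target equals, asymptotically, $\binom{n}{r}$ divided by $n^{s}\frac{(2r-2s-1)!}{r!(r-s-1)!}$, i.e. by R\"odl's lower bound for $\alpha(G(n,r,s))$; so we are asking for a near-perfect tiling of $\binom{[n]}{r}$ by near-optimal independent sets, and the numerical engine is that a Steiner system $S(q,2q-1,m)$ has $\binom{m}{q}/\binom{2q-1}{q}\sim m^{q}\frac{(q-1)!}{(2q-1)!}$ blocks.

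The natural starting point is the elementary colouring $A\mapsto\big(\sum_{i\in A}i,\sum_{i\in A}i^{2},\dots,\sum_{i\in A}i^{q}\big)\bmod p$ with $p$ a prime slightly larger than $n$: if $|A\cap B|=s$ then $X=A\setminus B$ and $Y=B\setminus A$ are disjoint $q$-sets, the colour vectors differ by the difference of the power-sum vectors of $X$ and $Y$, and since $X\ne Y$ and $p>n$ Newton's identities force this difference to be nonzero. This already gives $\chi\le(1+o(1))n^{r-s}$, but it wastes a factor $\binom{2q-1}{q}\cdot q!$; the $q!$ because power sums identify $q$-sets with only a $1/q!$-fraction of $\mathbb{F}_p^{q}$, and the $\binom{2q-1}{q}$ because one should \emph{group} $q$-sets. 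Here Keevash enters: a Steiner system $S(q,2q-1,n)$, which exists for $n$ in the appropriate residue classes, partitions $\binom{[n]}{q}$ into blocks, each block a $(2q-1)$-set whose $\binom{2q-1}{q}$ many $q$-subsets pairwise intersect. The colour of an $r$-set $A$ should be ``the block of $S(q,2q-1,n)$ attached to $A$'', and the mechanism making this a proper colouring is the pigeonhole inequality $2q>2q-1$: when $|A\cap B|=s$, the disjoint $q$-sets $A\setminus B$ and $B\setminus A$ cannot both lie in one $(2q-1)$-block, so if the block attached to $A$ ``sees'' $A\setminus B$ and that attached to $B$ ``sees'' $B\setminus A$, they differ.

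First I would make the rule $A\mapsto(\text{its block})$ precise. The subtlety is that the distinguished $q$-subset of $A$ witnessing a clash with $B$ is $A\setminus B$, which depends on $B$, so a naive canonical choice (e.g. the $q$ smallest elements of $A$, or the lexicographically least among the blocks meeting $A$) does not work. One convenient remedy is to compose the power-sum colouring with a fixed ``rounding'' map $\mathbb{F}_p^{q}\to(\text{blocks})$ adapted to $S(q,2q-1,n)$ — equivalently, to pass to the Cayley graph of $\mathbb{F}_p^{q}$ whose connection set is the set of power-sum differences of disjoint $q$-sets and colour \emph{that}, which makes the choice uniform over all $r$-sets with a given power-sum vector; one then verifies, again via $2q>2q-1$ together with Newton's identities, that the blocks of $S(q,2q-1,n)$ induce a proper colouring with $(1+o(1))p^{q}\frac{(q-1)!}{(2q-1)!}$ colours. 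The remaining steps are routine in spirit: remove the divisibility hypothesis on $n$ by deleting $O(1)$ points to reach a good residue class and putting back the $r$-sets meeting the deleted points, and absorb the $r$-sets left uncovered by the design into additional colours.

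The main obstacle is exactly this error analysis. The na\"ive leftover estimate costs $\Theta(n^{r-s})$ colours and would ruin the constant, so one needs the structure coming out of Keevash (or out of the R\"odl nibble, which his methods subsume) to be \emph{locally} almost perfect — every vertex of $G(n,r,s)$ must have only $o(n^{r-s})$ of its neighbours uncovered, not merely a $(1-o(1))$-fraction of all $r$-sets covered — and the few ``rounding-defective'' $r$-sets must be shown to be similarly spread out, so that the leftover induces a subgraph of maximum degree $o(n^{r-s})$ and can be finished with $o(n^{r-s})$ extra colours. Establishing these quantitative local bounds, together with the bookkeeping that confirms the block count yields precisely $\frac{(r-s-1)!}{(2r-2s-1)!}$ for every $r>s$, is the technical heart of the argument.
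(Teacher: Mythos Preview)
Your approach differs substantially from the paper's, and --- as you yourself flag --- the central steps (defining the ``rounding map'' and carrying out the local error analysis) are not actually done. The paper's proof bypasses every one of these difficulties by changing the block size and invoking a stronger input from Keevash. It takes blocks of size $2r-s-1$ (not $2q-1=2r-2s-1$) and uses Keevash's \emph{complete resolution}: for admissible $n$ there is an $(n,2r-s-1,r)$-design $\mathcal D$ together with a decomposition $\mathcal D=\mathcal D_1\cup\cdots\cup\mathcal D_N$ into $(n,2r-s-1,s)$-designs. Since every $r$-set lies in exactly one block of $\mathcal D$, the rule ``colour $A$ by the index $i$ for which $A\subset X\in\mathcal D_i$'' is canonical, so your ``which $q$-subset of $A$?'' issue never arises. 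If $A,B$ get the same colour $i$, their ambient blocks $X,Y\in\mathcal D_i$ either coincide (forcing $|A\cap B|\ge 2r-(2r-s-1)=s+1$) or meet in at most $s-1$ points (as $\mathcal D_i$ is an $(n,2r-s-1,s)$-design); in both cases $|A\cap B|\ne s$. A one-line count gives
\[
N=\frac{\binom{n}{r}/\binom{2r-s-1}{r}}{\binom{n}{s}/\binom{2r-s-1}{s}}\sim n^{r-s}\,\frac{(r-s-1)!}{(2r-2s-1)!},
\]
and adjusting $n$ by $O(1)$ to satisfy the divisibility conditions costs only a $1+o(1)$ factor. There is no leftover, no rounding, no error analysis.

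By contrast, your $S(q,2q-1,n)$ partitions $q$-subsets, not $r$-subsets; an $r$-set $A$ contains $\binom{r}{q}$ of them, typically in distinct blocks, and the one that matters for a clash with $B$ is $A\setminus B$. Your proposed fix via the Cayley graph on $\mathbb F_p^{q}$ with connection set $\{\text{(power-sum of }X)-\text{(power-sum of }Y): X,Y\text{ disjoint }q\text{-sets}\}$ would require exhibiting a proper colouring of that Cayley graph with $\sim p^{q}\frac{(q-1)!}{(2q-1)!}$ colours, and you have not shown that the blocks of $S(q,2q-1,n)$ (or any other construction) provide one; nor have you dealt with the $r$-sets whose power-sum vector is not near any block. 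So the proposal identifies the correct target constant but the genuine gap is precisely the missing well-defined colouring rule and leftover bound --- both of which the paper's approach renders unnecessary by using the resolution of an $(n,2r-s-1,r)$-design instead of a single Steiner system on $q$-sets.
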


Note that if $r \le 2s+1$ and $(r-s)$ is a power of a prime then the bound in Theorem \ref{cor0} coincides with known lower bounds. In fact, Theorem \ref{cor0} is a simple corollary of recent results of Keevash \cite{K2}. Results of \cite{K2} require $n$ to be extremely large compared to $r, s$ whereas in most of the  applications of graphs $G(n, r, s)$ one needs to consider $r, s$ growing with $n$. Moreover, in applications to combinatorial geometry one typically requires $r, s$ to grow linearly with $n$.  

The main aim of this paper is to present a different approach to the problem. Namely, we develop a new elementary approach and solve the special case $(r, s) = (4, 2)$, which is the first unsolved case in the region $r \le 2s+1$. The best known upper bound on the chromatic number of $G(n, 4, 2)$ is $\frac{n^2}{2}+100n$ \cite{BKK}. 
Note that if we consider the family of vertices of $G(n, 4, 2)$ which contain element $\{1\}$, the induced subgraph will be isomorphic to $G(n-1, 3, 1)$. This means that any proper coloring of $G(n, 4, 2)$ will automatically lead to a proper coloring of $G(n, 3, 1)$, so it is important to understand how to color $G(n-1, 3, 1)$ first. In Section \ref{s2} we provide a simple proof of the inequality $\chi(G(n, 3, 1)) \le \frac{(n-1)(n-2)}{6}$ for $n = 2^t$ being a power of $2$. 
	

\begin{theorem}\label{main}
$\chi(G(n, 4, 2)) \le (1+o(1)) \frac{n^2}{6}.$
\end{theorem}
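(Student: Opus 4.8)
\emph{Proof proposal.} The plan is to follow the philosophy that works for $G(n,3,1)$ in Section \ref{s2}: build the colour classes out of the affine structure of $\FF_2^{t}$ and index them, essentially, by the $2$-dimensional linear subspaces of $\FF_2^{t}$, of which there are $\tfrac{(2^{t}-1)(2^{t}-2)}{6}\sim\tfrac{n^{2}}{6}$. I would first do the case $n=2^{t}$ (a general $n$ requiring a separate, more careful bookkeeping, e.g.\ via a binary decomposition of the ground set into affine pieces). Fix an affine splitting $\FF_2^{t}=H_{0}\sqcup H_{1}$ into a linear hyperplane and its coset, $|H_{0}|=|H_{1}|=n/2$, and classify a $4$-set $A$ by the pair $(|A\cap H_{0}|,|A\cap H_{1}|)\in\{(4,0),(3,1),(2,2),(1,3),(0,4)\}$, colouring the five types from (essentially) disjoint palettes and recursing on $n$.

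Sets of type $(4,0)$ and $(0,4)$ induce copies of $G(n/2,4,2)$ and are mutually non-adjacent, so they share one palette handed down by the induction hypothesis. Sets of type $(3,1)$ are a triple $T\subseteq H_{0}$ with a point $r\in H_{1}$ adjoined, and two of them are adjacent iff either $r=r'$ and $T,T'$ meet in one point, or $r\ne r'$ and $T,T'$ meet in two points; the first kind of edge is already killed by the subspace colour $W(T)=\langle t_{1}+t_{2},\,t_{1}+t_{3}\rangle$ of Section \ref{s2}, but the second is new, since two triples of a common affine plane share two points yet carry the same $W(T)$ — inside each such plane one meets a copy of $K_{4}\times K_{|H_{1}|}$, so the palette of a single subspace must be split into several refined classes, and getting these to fit within the budget is itself delicate. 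Type $(1,3)$ is symmetric, and because pairwise sums of points of the coset $H_{1}$ still land in $H_{0}$ it too is coloured by subspaces of $H_{0}$, but a type-$(3,1)$ set and a type-$(1,3)$ set can be adjacent, so their palettes are kept disjoint. The accounting one aims for is that the three cross-types together cost only $(1+o(1))\tfrac12(n/2)^{2}$ new colours per level, so that, since $\sum_{k\ge0}4^{-k}=\tfrac{4}{3}$, unrolling the recursion yields $\chi(G(n,4,2))\le(1+o(1))\tfrac{n^{2}}{6}$.

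The real obstacle is type $(2,2)$. Writing $H_{1}=H_{0}+u$, such a $4$-set is an \emph{ordered pair of edges} $(P,Q)$ of the complete graph on $H_{0}$ — in general neither disjoint nor distinct — and $(P,Q)$ is adjacent to $(P',Q')$ exactly when $|P\cap P'|+|Q\cap Q'|=2$, i.e.\ $P=P'$ and $Q\cap Q'=\emptyset$, or $Q=Q'$ and $P\cap P'=\emptyset$, or $|P\cap P'|=|Q\cap Q'|=1$. This graph on $\binom{H_{0}}{2}^{2}$ has $\sim\tfrac14(n/2)^{4}$ vertices, yet the colour budget remaining after the other types is only $(1+o(1))\tfrac{(n/2)^{2}}{6}$ — as few colours as the vastly smaller graph $G(n/2,3,1)$. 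The naive invariant $(P,Q)\mapsto\langle\bar P,\bar Q\rangle$ (with $\bar P=t_{1}+t_{2}$ the direction of $P$) fails, because two parallel disjoint edges $Q,Q'$ have $\bar Q=\bar Q'$ while $(P,Q)\sim(P,Q')$; a product colouring $\phi(P)\times\psi(Q)$ is hopelessly wasteful (even the subgraph with $P=P'$ or $Q=Q'$ alone costs $\Theta(n)$ colours, and the ``$|P\cap P'|=|Q\cap Q'|=1$'' edges seem to force $\Theta(n^{2})$ via products); and one cannot afford the bounded refinement used for type $(3,1)$. I would attack this pair-of-edges graph by a \emph{second} recursion — halving $H_{0}$, classifying $(P,Q)$ by which sub-halves the (at most eight) endpoints fall into, recursing on the ``one-sided'' part and reducing the ``evenly split'' part, after another round of sum/subspace bookkeeping, to lower-dimensional instances — with the two geometric series arranged so the whole scheme telescopes to $(1+o(1))\tfrac{n^{2}}{6}$. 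Verifying that this nested recursion closes with \emph{exactly} the right constant, and that no cross-type adjacency (such as a type-$(2,2)$ set meeting a type-$(3,1)$ set in two points) smuggles in an extra factor, is where I expect essentially all of the work to lie.
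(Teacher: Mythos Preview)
Your proposal is a plan, not a proof, and the plan has a genuine gap at precisely the point you yourself flag: the type-$(2,2)$ case. You need to colour the ``pair-of-edges'' graph on $\sim\tfrac14(n/2)^{4}$ vertices with only $(1+o(1))\tfrac{(n/2)^{2}}{6}$ colours, and you give no construction---only the hope that an unspecified ``second recursion'' will telescope to the right constant. This is not a technicality. The hyperplane recursion over $\FF_{2}^{t}$ is exactly the method carried out in \cite{BKK}, and there it yields $\chi(G(n,4,2))\le\tfrac{n^{2}}{2}+100n$, a factor of three from the target; nothing in your outline explains how to cross that barrier. Even the type-$(3,1)$ bookkeeping is optimistic: the within-subspace refinement you sketch already multiplies the palette by a constant (the graph you call $K_{4}\times K_{|H_{1}|}$ needs four colours, so naively $4\cdot\tfrac{(n/2)^{2}}{6}$ colours for type $(3,1)$ alone, overshooting the entire cross-type budget), and the cross-subspace edges---two triples meeting in two points need not span the same $2$-plane---are not handled at all.

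The paper's argument is of a completely different nature. It identifies $[n]$ with $\FF_{p}^{2}\setminus\{0\}$ for a prime $p\sim\sqrt{n}$ (not with $\FF_{2}^{t}$), takes as an approximate $(n,5,4)$-design the family $\A$ of $5$-sets of pairwise non-collinear vectors summing to $0$, and colours a generic $4$-set by the colour of the unique member of $\A$ containing it. The $5$-sets are then split into $\sim n^{2}/120$ orbits under $GL_{2}(\FF_{p})$, and each orbit is $20$-coloured by a determinant-based invariant (Lemma~\ref{crucial}); making this invariant well defined requires first excising a thin ``wall'' $\A^{wall}$ that breaks all long cycles in the orbit's Cayley graph (Lemma~\ref{wall}), and the analysis of which orbits are generic enough rests on polynomial identities among the $2\times2$ determinants $\det(a_{i},a_{j})$ (Lemmas~\ref{short} and~\ref{trivial}). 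The leftover $4$-sets are mopped up by degree bounds together with a list-chromatic estimate (Lemma~\ref{list}). None of this is a hyperplane recursion, and the constant $\tfrac16$ emerges from $|\fG|\sim n^{2}$ and the orbit count, not from a geometric series.
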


Of course, Theorem \ref{main} is a particular case of Theorem \ref{cor0} but techniques developed in the proof of this result may be of independent interest. 

In addition, in order to prove Theorem \ref{main} we need to estimate the {\it list chromatic number} of $G(n, r, s)$. Recall that the list chromatic number $\chi_{list}(G)$ of a graph $G$ is the minimal number $k$ such that for any arrangement of sets $L(v), ~v \in V(G)$, each $L(v)$ of size $k$, there are colors $c(v) \in L(v)$ such that each edge is not monochromatic. In the end of Section \ref{ss6} we prove the following.

\begin{lemma} \label{listi} 
Fix $r, s$ and  let $n \rightarrow \infty$, then 
$\chi_{list}(G(n, r, s)) = O(n^{s+1} \log n) $.
\end{lemma}

It would be interesting to obtain more estimates on $\chi_{list}(G(n, r, s))$ in various asymptotic regimes as well.

In Section \ref{s1} we prove Theorem \ref{cor0} and in Section \ref{ss2} we prove Theorem \ref{main}, Section \ref{remarks} contains some final remarks. 

\vskip 1cm

{\bf Acknowledgements.} I would like to acknowledge prof. Raigorodskii for introducing me to the subject and for his constant encouragement. I would like to thank referees for helpful suggestions and comments.

\section{Proof of Theorem \ref{cor0}}\label{s1}

Recall from \cite{K2} that a family $\F$ of $r$-element subsets of an $n$-set $X$ forms an $(n, r, s)$-{\it design} if every $s$-element subset of $[n]$ belongs to exactly one element of $\F$. A {\it complete resolution} of ${[n] \choose r}$ is a partition of ${[n] \choose r}$ into $(n, r, r-1)$-designs, each of which is partitioned into $(n, r, r - 2)$-designs, and so on, down to $(n,r, 1)$-designs.  

Keevash \cite{K2} proved that a complete resolution of ${[n] \choose r}$ always exists provided that $n \equiv r \pmod {{\rm gcd}[r]}$ and $n$ is sufficiently large.  

In particular, given $n$ as above for any $p>q$ there is an $(n, r, p)$-design which can be decomposed into $(n, r, q)$-designs.

We relate decompositions of designs and colorings of $G(n, r, s)$ by the following simple claim.
\begin{claim}
Suppose that there is an $(n, 2r-s-1, r)$-design which can be decomposed into $(n, 2r-s-1, s)$-designs. Then $\chi(G(n, r, s)) \le N := \frac{{n \choose r}}{{2r-s-1 \choose r}}/\frac{{n \choose s}}{{2r-s-1 \choose s}}$.
\end{claim}
\begin{proof}
If possible, take a decomposition of an $(n, 2r-s-1, r)$-design into $(n, 2r-s-1, s)$-designs: $\mathcal D = \mathcal D_1 \cup \ldots \cup \mathcal D_N$ where $N$ is as in the statement of the claim. As $\mathcal D$ is an $(n, 2r-s-1, r)$-design, this decomposition induces a decomposition of ${[n] \choose r}$ into $N$ classes: an $r$-set $A$ belongs to a class $i$ if there is a set $X \in \mathcal D_i$ such that $A \subset X$. If two $r$-sets $A, B$ belong to the same class $i$ then there are $X, Y \in \mathcal D_i$ such that $A \subset X, B \subset Y$. Because $\mathcal D_i$ is an $(n, 2r-s-1, s)$-design we have either $X=Y$ and $|A \cap B| \ge s+1$ or $|X \cap Y| \le s-1$ and $|A \cap B| \le s-1$. In both cases $A$ and $B$ are not connected by an edge and we have constructed a proper coloring of $G(n, r, s)$ using $N$ colors.

\end{proof}

For any sufficiently large $n$ we take $n'$ such that $n+r! \ge n' \ge n$ and $n' \equiv r \pmod{gcd[r]}$. By Keevash's theorem and the above claim we obtain
$$
\chi(G(n, r, s)) \le \chi(G(n', r, s)) \le (1+o(1))n'^{r-s} \frac{r! (r-s-1)!}{r!(2r-s-1)!} / \frac{s! (2r-2s-1)!}{s! (2r-s-1)!}=(1+o(1))n^{r-s} \frac{(r-s-1)!}{(2r-2s-1)!}.
$$

\section{Proof of Theorem \ref{main}}\label{s3}

\subsection{Sketch of the proof} \label{s2}

In the proof of Theorem \ref{cor0} we colored graph $G(n, r, s)$ using a decomposition of a certain design into other designs. In the case of $G(n, 4, 2)$ our strategy will be the same but the main difficulty is to construct the required designs without use of heavy machinery. To color $G(n, 4, 2)$ we need to build a decomposition of an $(n, 5, 4)$-design into $(n, 5, 2)$-designs, but we will be able to provide only an approximate version of this decomposition and this will suffice for our purposes. We call a family $\F \subset {[n] \choose r}$ an approximate $(n, r, s)$-design if each $s$-element set is contained in at most one set from $\F$ and if the number of $s$-element subsets which are not contained in any set of $\F$ is $o({n \choose s})$.

Our proof is largely inspired by the proof of the following simple result about $G(n, 3, 1)$:

\begin{theorem}[\cite{BKR}]\label{BKR}
Let $n = 2^t$. Then $\chi(G(n, 3, 1)) \le \frac{(n-1)(n-2)}{6}$.
\end{theorem}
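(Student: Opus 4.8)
The plan is to exhibit an explicit proper coloring of $G(n,3,1)$ with $n=2^t$ using the additive structure of $\FF_2^t$. Identify the ground set $[n]$ with $\FF_2^t$. The key observation is that a triple $\{a,b,c\} \subset \FF_2^t$ either satisfies $a+b+c = 0$ (a \emph{line} through the origin's translate, i.e. an affine line of the plane $\FF_2^t$, since in characteristic $2$ three points are collinear in $AG(t,2)$ precisely when they sum to zero) or it does not. First I would color all the triples $T = \{a,b,c\}$ with $a+b+c = 0$: for such a triple assign the single color equal to the common value $a+b = b+c = a+c \in \FF_2^t \setminus \{0\}$ — there are $n-1$ such colors, and two collinear triples sharing exactly one point must lie on different affine lines, hence receive different "sum" values, so this class is properly colored. (One should double-check the edge condition: if $\{a,b,c\}$ and $\{a,b',c'\}$ are both zero-sum and share the single vertex $a$, then $b+c = a = b'+c'$ forces... actually it forces the two color values to be $b+c$ and $b'+c'$, which could coincide; the correct invariant to use is the \emph{pair} $\{a{+}b, a{+}c\}$ or equivalently the affine line itself — I would assign to a zero-sum triple the $2$-element subset $\{$direction$\}$, but since all three pairwise sums of a zero-sum triple are equal, the affine line is determined by the point $a$ together with the direction $a+b$; two distinct affine lines through the common point $a$ have distinct directions, so coloring by the direction $a+b \in \FF_2^t\setminus\{0\}$ does separate them. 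This gives $n-1$ colors for the zero-sum triples.)

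Next I would color the non-zero-sum triples. For such $T=\{a,b,c\}$ the three pairwise sums $a+b$, $b+c$, $a+c$ are three distinct nonzero elements of $\FF_2^t$ whose sum is $0$, i.e. they themselves form a zero-sum triple in $\FF_2^t \setminus \{0\}$. I would like to use this to route each non-zero-sum triple to a color so that intersecting-in-one-point triples get different colors. The natural move is to pick, within each zero-sum triple $\{x,y,z\} \subset \FF_2^t\setminus\{0\}$ of "differences", a canonical representative and let that index a color, together with enough extra data to resolve conflicts. The count we are aiming for is $\frac{(n-1)(n-2)}{6}$ total colors; since the zero-sum triples used $n-1$ colors we have roughly $\frac{(n-1)(n-2)}{6} - (n-1)$ left, which is about the number of zero-sum triples in $\FF_2^t \setminus\{0\}$ (there are $\binom{n-1}{2}/3 = \frac{(n-1)(n-2)}{6}$ of them). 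So the clean statement is: color each non-zero-sum triple $T$ by the zero-sum triple $\{a+b,b+c,a+c\}$ of its pairwise differences, reusing the \emph{same} palette of $\frac{(n-1)(n-2)}{6}$ "difference-triple" colors — and additionally reuse these same color classes to absorb the zero-sum triples if possible, so that the grand total is exactly $\frac{(n-1)(n-2)}{6}$. I would check that two non-zero-sum triples sharing exactly one vertex, say $a$, cannot have the same difference-triple: if $\{a,b,c\}$ and $\{a,b',c'\}$ both have difference set $\{x,y,z\}$, then $\{b,c\} = \{a+x, a+y\}$ or a permutation and likewise $\{b',c'\}$, and because the difference set determines the triple up to the choice of one point, $a$ determines the triple completely, so $\{b,c\} = \{b',c'\}$ — contradiction. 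Triples sharing no vertex are non-adjacent and need no attention. Triples sharing two vertices are adjacent only if the third-element intersection is exactly $1$, which is impossible when they share two vertices, so again nothing to check.

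The remaining bookkeeping is to merge the two palettes so the total is exactly $\frac{(n-1)(n-2)}{6}$ rather than $\frac{(n-1)(n-2)}{6} + (n-1)$. Here I would set up an injection or a direct identification: assign to the zero-sum triple with direction $d \in \FF_2^t \setminus \{0\}$ the difference-triple color of some fixed zero-sum difference-triple containing $d$, chosen so that no zero-sum triple collides with a non-zero-sum triple of the same color in an edge. Concretely, a zero-sum triple $\{a, a+d, a\}$... rather $\{a,b,a+b\}$ with direction pattern $\{d,d,0\}$ — wait, a zero-sum triple has \emph{repeated} pairwise sum $d$ and a third pairwise sum $0$, which is degenerate — so zero-sum triples have a genuinely different difference-structure ($a+b = b+c = a+c$ all equal to $d$) and can be slotted into the color classes indexed by difference-triples that contain $d$, paying attention that within one such merged color class there is no intersecting-in-one pair. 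I expect \textbf{this merging step — getting the bound to be exactly $\frac{(n-1)(n-2)}{6}$ and not just $O(n^2)$ — to be the main obstacle}, since the two types of triples must be interleaved into a single palette without creating a monochromatic edge; the $\FF_2$-arithmetic should make it work but the casework on which triples land in which class needs care. Everything else (the $\FF_2^t$ identification, the two separate colorings, and the edge checks) is routine linear algebra over $\FF_2$.
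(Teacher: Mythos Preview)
Your coloring of the non-zero-sum triples by their set of pairwise sums $\{a+b,\,b+c,\,a+c\}$ is exactly the paper's coloring in disguise: two triples have the same pairwise-sum set if and only if they differ by a translation in $\FF_2^t$, and the paper simply colors each triple by its translation class. Since there are $\binom{n}{3}/n=\frac{(n-1)(n-2)}{6}$ such classes, the count comes out on the nose with no merging needed.

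The genuine gap in your write-up is the treatment of zero-sum triples, which rests on a false identity. You assert that when $a+b+c=0$ the three pairwise sums coincide (``the common value $a+b=b+c=a+c$''), and you build the separate ``direction'' palette and the whole merging problem on this. But in $\FF_2^t$, from $a+b+c=0$ one gets $a+b=c$, $b+c=a$, $a+c=b$: the pairwise sums are the three \emph{distinct} elements of the triple itself, not a single repeated value. (There are no $3$-point affine lines in $AG(t,2)$; you may be thinking of $AG(t,3)$.) Consequently the difference-set map $\{a,b,c\}\mapsto\{a+b,b+c,a+c\}$ is perfectly well defined on zero-sum triples too --- it sends such a triple to itself --- and your ``non-zero-sum'' argument that a fixed vertex together with the difference set determines the whole triple goes through verbatim in every case. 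So there is no second palette and no merging step: one coloring by difference set (equivalently, by translation class) handles all triples at once, and the paper's three-line computation shows that no two triples in the same class meet in exactly one point. Once you drop the erroneous zero-sum analysis, your proof collapses to the paper's.
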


Note that this bound is tight (see \cite{BKK}).

\begin{proof}
Identify  $[n]$ with $V = \FF_2^t$. Let us say that two triples of vectors are equivalent if one of them can be obtained from another by a translation. It is easy to see that there are exactly ${|V| \choose 3} / |V| = \frac{(n-1)(n-2)}{6}$ equivalence classes. 

Now we prove that each class form an independent set, i.e. any two sets from the same class intersect in an even number of elements. Take a pair of equivalent triples $\{a, b, c\}, \{a, d, e\}$, by definition we have $\{a, b, c\}+v = \{a, d, e\}$ for some $v \in V\setminus 0$. Note that $a+b \neq a+d, a+e$, so $a+b = d+e$ because $a+b = (a+v)+(b+v) \in \{a+d, a+e, d+e\}$. By the same reasoning $a+c = d+e$ and, therefore, $b=c$. A contradiction.\footnote{The presented proof differs from that given in \cite{BKR}. I found this proof on the cite of Moscow Mathematical Olympiad (https://olympiads.mccme.ru/mmo/2012/75mmo.pdf, page 44, in Russian). The original proof is by induction on the power of $2$, and it generalizes to the bound $\chi(G(n, 3, 1)) \le \frac{n(n-1)}{6} +cn$ for arbitrary $n$ \cite{BKR}.  Consequently, in \cite{BKK} the same idea was applied to $G(n, 4, 2)$ and the bound $\chi(G(n, 4, 2)) \le \frac{n^2}{2}+100n$ was obtained. The last inequality is the best previously known upper bound for the chromatic number of $G(n, 4, 2)$.}

\end{proof}

\vskip 0.5cm

Now we describe ideas of the proof. 


At first we note that we may assume that $n=p^2-1$, where $p$ is a prime. So we can identify $[n]$ with the set $\FF_p^2 \setminus 0$. Denote $V = \FF_p^2,~ \fG = GL_2(\FF_p)$ and consider the family $\A = \{\{v_1, \ldots, v_5\}:~v_i \in V\setminus 0,~v_1 + \ldots + v_5 = 0,~ v_1, \ldots, v_5 \text{ are pairwise non-collinear}\}$. It is easy to see that $\A$ is an approximate $(n, 5, 4)$-design and $|\A| \sim \frac{n^4}{120}$.


First, we show how to color in a small number of colors all $4$-element sets which are contained in  sets from $\A$. In order to color them properly it is enough to color sets from $\A$ in such a way that two sets of the same color intersect in at most $1$ element. To do this, we divide $\A$ into orbits under the action of $\fG$: $\A=\A_1 \cup \ldots \cup \A_l$. We note that $l \sim \frac{n^2}{120}$ because $|\A|\sim \frac{n^4}{120}$ and $|\fG| \sim n^2$. The idea is to color each orbit separately from the others. The main observation is that inside of each orbit the induced subgraph has a very special structure: to see this, let $A_1, A_2 \in \A_i$ be an adjacent pair of vertices from the same orbit, that is $|A_1 \cap A_2| \ge 2$; since $\A_i$ is an orbit, there is $g\in \fG$ such that $A_2 = gA_1$, so the map $g$ maps a pair of elements of $A_1$ to another pair of elements of $A_1$. Since any linear map on a vector space is completely determined by its action on a basis elements, this in particular implies that the degree of any vertex in $\A_i$ is always at most $20 \cdot 19$.  
Let $E_i$ be the set of $g \in \fG$ such that $|gA_1 \cap A_1| \ge 2$. We conclude that the graph induced on $\A_i$ has the structure of a Cayley graph on $\fG$ with the set of generators $E_i$.

For most of the orbits $\A_i$ we are able to gain enough control on the local structure of this Cayley graph using algebraic tools. This allows us to construct a proper coloring of a large neighborhood of any vertex in the orbit $\A_i$. But there may be some ``global" obstructions to extend these ``local" colorings to the whole orbit. 
To overcome this, we construct a reasonably small set $\A^{wall} \subset \A$ such that for almost all orbits $\A_i$ the set $\A_i \setminus \A^{wall}$ splits into tiny components for which the local coloring can be applied. It remains to color the set $\A^{wall}$ and all remaining orbits which have not yet been colored. This can be done by estimating the maximal degrees of the corresponding induced subgraphs and using other crude bounds. In particular, the following simple observation is used.

\begin{claim}
Let $H$ be a graph. Suppose that the vertex set of $H$ is covered by a system of subsets: $V(H) = \bigcup_{i=1}^{m} A_i$. Suppose that for any $v \in V(H)$ the number of edges between $v$ and the set $\bigcup_{A_i \not \ni v} A_i$ is at most $d$. Suppose that for any $i$ $\chi_{list}(H|_{A_i}) \le l$ holds. Then $\chi(G) \le l+d$.  
\end{claim}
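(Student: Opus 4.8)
The plan is to colour $H$ greedily, processing the covering sets one at a time, after first replacing the cover by the partition it induces. Put $B_1 := A_1$ and $B_i := A_i \setminus (A_1 \cup \dots \cup A_{i-1})$ for $i \ge 2$; the sets $B_i$ are pairwise disjoint, $\bigcup_i B_i = V(H)$, and $B_1 \cup \dots \cup B_i = A_1 \cup \dots \cup A_i$ for every $i$. Fix a palette $C$ of $l + d$ colours. I will colour $B_1, B_2, \dots, B_m$ in this order (ignoring the empty ones), maintaining the invariant that after step $i$ the colours chosen on $B_1 \cup \dots \cup B_i$ form a proper colouring of $H|_{B_1 \cup \dots \cup B_i}$.

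The step from $i-1$ to $i$ rests on the following observation. Let $v \in B_i$. By construction $v \notin A_j$ for every $j < i$, so each such $A_j$ is a set of the cover not containing $v$; hence $A_1 \cup \dots \cup A_{i-1} \subseteq \bigcup_{A_k \not\ni v} A_k$, and the hypothesis gives that $v$ has at most $d$ neighbours in $A_1 \cup \dots \cup A_{i-1} = B_1 \cup \dots \cup B_{i-1}$, i.e. at most $d$ already-coloured neighbours. So I assign to each $v \in B_i$ a list $L(v) \subseteq C$ consisting of colours not used on any already-coloured neighbour of $v$, shrunk to size exactly $l$ (possible since $|C| - d = l$). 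Because $B_i \subseteq A_i$, the graph $H|_{B_i}$ is an induced subgraph of $H|_{A_i}$, so $\chi_{list}(H|_{B_i}) \le \chi_{list}(H|_{A_i}) \le l$; hence there is a choice $c(v) \in L(v)$ for all $v \in B_i$ with no monochromatic edge inside $B_i$. The extended colouring stays proper: a monochromatic edge would either lie inside $B_i$, which the list colouring forbids, or join some $v \in B_i$ to an earlier-coloured neighbour $u$, which is forbidden by $c(v) \in L(v)$.

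Carrying this out for all $i$ yields a proper colouring of $H$ with the $l+d$ colours of $C$, hence $\chi(H) \le l+d$. I do not expect a genuine obstacle here: the two points that need care are (i) passing from the cover $\{A_i\}$ to the induced partition $\{B_i\}$, so that each vertex is coloured exactly once and the degree hypothesis applies verbatim to the set of already-coloured neighbours of a freshly treated vertex, and (ii) the monotonicity $\chi_{list}(H|_{B_i}) \le \chi_{list}(H|_{A_i})$ under restriction to an induced subgraph, which is immediate from the definition of the list chromatic number. Once the ordering and the lists are set up correctly, the rest is bookkeeping.
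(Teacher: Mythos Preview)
Your argument is correct and follows essentially the same route as the paper's own proof: process the covering sets in order, at each step colour $A_{i+1}\setminus(A_1\cup\dots\cup A_i)$ by observing that every new vertex has at most $d$ already-coloured neighbours (since it lies in none of $A_1,\dots,A_i$), form lists of size $\ge l$, and invoke the list-chromatic bound on $A_{i+1}$. Your write-up is in fact a bit more careful than the paper's, spelling out the monotonicity $\chi_{list}(H|_{B_i})\le\chi_{list}(H|_{A_i})$ and the reason the degree hypothesis applies.
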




\subsection{Beginning of the proof}\label{ss2}

By Prime Number Theorem, for any natural $n$ there is a prime $p$ such that $p^2-1\ge n$ and $p^2-1 \sim n$. Because $G(n, 4, 2)$ can be embedded into $G(p^2-1, 4, 2)$ we may assume that $n=p^2-1$. Consequently, we may identify $[n]$ with $\FF_p^2 \setminus 0$. Denote $V=\FF_p^2$ and let $\fG = GL_2(\FF_p)$ be the general linear group of $V$.

First, we introduce some notation. Let $\A = \{ \{v_1, \ldots, v_5\}:~v_i \in V\setminus 0,~v_1 + \ldots + v_5 = 0,~v_i,\, v_j\, \text{are not collinear}\}$ and, similarly, $\A_{ord} = \{ (v_1, \ldots, v_5):~v_i \in V\setminus 0,~v_1 + \ldots + v_5 = 0,~v_i,\, v_j\, \text{are not collinear}\}$ (note that $\A \subset {V \choose 5}$ but $\A_{ord} \subset V^5$). There is a natural projection $\pi: \A_{ord} \rightarrow \A$. 

Throughout the proof we use indexations like $d_{ij}$ or $\prod_{ij}$ omitting the range of $i, j$. Unless otherwise specified, it means that the range is $1 \le i, j \le 5$ and $i \neq j$. For any family $\F$ and a set $S$ we denote by $\F(S)$ the subfamily of sets from $\F$ containing $S$.

For linearly independent vectors $a, b \in V$, denote by $g_{a, b}$ the linear map which maps the standard basis $e_1, e_2$ to $a, b$. Note that $g_{a, b} \in \fG$ and the matrix of this operator is just $(a, b)$. Denote by $g_{ij}: \A_{ord} \rightarrow \fG$ a function which maps a sequence $A=(a_1, \ldots, a_5) \in \A_{ord}$ to the operator $g_{a_i, a_j}$, i.e. $g_{ij}(A) = g_{a_i, a_j}$.

A {\it dependence $\omega$ of length $t$ } is a sequence $(d_{ij})$ (here $1 \le i, j \le 5$, $i \neq j$) of integers such that $\sum_{i \neq j} d_{ij}=0$ and $\sum_{i < j} |d_{ij}+d_{ji}| = 2t$. We think of each dependence $\omega$ as of a map $\A_{ord} \rightarrow \FF_p$ defined as follows:

\begin{equation} \label{omega}
 \omega(A) = \prod_{i \neq j} \det(g_{ij}(A))^{d_{ij}},
\end{equation}

A dependence $\omega$ is called trivial if $\omega(A) = 1$ for any $A\in \A_{ord}$. Otherwise $\omega$ is called nontrivial. Given two dependencies $\omega = (d_{ij})$ and $\omega' = (d'_{ij})$ one can define the product: $\omega \omega' := (d_{ij}+d'_{ij})$. In view of (\ref{omega}), it will be sometimes convenient to denote a dependence $\omega = (d_{ij})$ as $\omega = \prod \det( g_{ij})^{d_{ij}}$. 

Fix $t = n^{0.01}$, denote by $\A^{short}_{ord}$ the set of all sequences $A \in \A_{ord}$ such that there is a {\it nontrivial} dependence $\omega$ of length at most $t$ and $\omega(A)=1$. Denote $\A^{short} = \pi(\A_{ord}^{short})$ and let $\A^{long} := \A \setminus \A^{short}$ and $\A^{long}_{ord} := \A_{ord} \setminus \A_{ord}^{short}$.

Now we define sets $\A^{wall}_{ord}$ and $\A^{wall}$. Let $E$ be the set of functions $ \A_{ord} \rightarrow \fG$ of the form $g_{ij}g_{kl}^{-1}(A) := g_{ij}(A)g_{kl}^{-1}(A)$ where $i \neq j, k \neq l$ and $(i, j) \neq (k, l)$. The motivation of this definition is that 
two element $hA, h'A \in \fG A$ of the orbit of $A$ intersect in at least two elements if and only if $h^{-1}h' = e(A)$ for some $e \in E$.

Each element $g_{ij}g_{kl}^{-1}$ of $E$ determines a dependence $\omega = \det g_{ij} g_{kl}^{-1}$ that is $\omega$ is a sequence $(d_{rs})$ with $d_{ij} = 1, d_{kl} = -1$ and all other coordinates are set to be zero. For a sequence of elements $g_1, \ldots, g_m \in E$ we denote by $g_1 g_2 \ldots g_m: \A_{ord} \rightarrow \fG$ a function which maps a sequence $A$ to the operator $g_1(A)g_2(A) \ldots g_m(A)$, also we denote by $\det(g_1g_2 \ldots g_m)$ the product of dependencies $\det(g_i)$.

Abusing the notation, for an arbitrary family $\F$ and elements $\alpha, \beta$ denote $\F(\alpha, \beta) := \F(\{\alpha, \beta\})$.

In Section \ref{ss4} we will prove the following lemma.  
\begin{lemma}\label{lmin}
There are sets $\A^{wall}_{ord} \subset \A_{ord}$ and $\A^{wall}  = \pi(\A^{wall}_{ord})$ such that: 

1. Choose any $A \in \A_{ord}^{long}$ and $g_1, \ldots, g_m \in E$. Suppose that the dependence $\omega = \det(g_1 g_2 \ldots g_m)$ is of length at least $t/3$ and $\omega(A)=1$. Then for some $i$ we have $g_1 g_{2} \ldots g_i(A)A \in \A^{wall}_{ord}$.

2. For any pair of vectors $\alpha, \beta$ we have $|\A^{wall}(\alpha, \beta)| = o(n^2)$.
\end{lemma} 

Let us briefly explain the meaning of these definitions and the role of dependencies in the proof. For the sake of simplicity, here we ignore the difference between ordered and unordered families. Let $\A_1 \subset \A^{long}$ be an orbit of the action of $\fG$ and let $A$ be a representative of $\A_1$. Let us connect two different sets from $\A_1$ if they intersect in at least two elements. By definition of $E$, two sets $h_1A$ and $h_2A$ are connected by an edge if there is $g \in E$ such that $h_2^{-1}h_1 = g(A)$. Thus, for each cycle $\mathcal C = \{h_1A, \ldots, h_mA\} \subset \A_1$ of the resulting graph we can construct a dependence $\omega = \det (g_1 \ldots g_m)$ where $g_i \in E$ are such that $g_i(A) = h_{i-1}^{-1}h_i$.
Note that $\omega(A) = 1$ and vertices of $\mathcal C$ can be represented as follows:
$$
\mathcal C = \{ h_1 A, h_1 g_1(A) A, h_1 g_1 g_2(A) A, \ldots, h_1 g_1 \ldots g_{m-1}(A) A\}
$$
We call a cycle $\mathcal C$ nontrivial if the corresponding dependence is nontrivial. Now the first conclusion of Lemma \ref{lmin} implies that any nontrivial cycle $\mathcal C$ in a ``long" orbit $\A_1$ must intersect $\A^{wall}$ (to see this, note the identity $h_1 g_1(A)A = g_1(h_1 A) h_1 A$). Thus, the graph induced on the set $\A_1 \setminus \A^{wall}$ contains only ``trivial" cycles which will allow us to color this set optimally.

Finally, we form a set $\A_{good} = \A \setminus (\A^{short} \cup \A^{wall})$ and construct a graph $G$ on $\A_{good}$ in which two sets are adjacent if they intersect in at least two places.
 
\begin{lemma} \label{lm1}
$\chi(G) \le (1+o(1)) \frac{n^2}{6}$.
\end{lemma}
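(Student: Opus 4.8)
The plan is to color $G$ by coloring each $\fG$-orbit $\A_i \subseteq \A_{good}$ separately, in a way that is consistent with a global color palette of size $(1+o(1))\frac{n^2}{6}$. Recall that inside each orbit $\A_i = \fG A$ the induced subgraph is isomorphic to a Cayley graph on $\fG$ with connection set $E_i = \{e(A):~e\in E,~e(A)A\in\A_i,~|A\cap e(A)A|\ge 2\}$, a set of bounded size (at most $20\cdot 19$). The key structural input is that, after removing $\A^{short}$ and $\A^{wall}$, each surviving orbit breaks into \emph{tiny} connected components: if $g_1,\dots,g_m\in E$ and the product word $g_1\cdots g_m(A)$ fixes the pair $\{e_1,e_2\}$ set-wise (equivalently the associated dependence $\omega = \det(g_1\cdots g_m)$ is trivial or satisfies $\omega(A)=1$), then either $m$ is small (the dependence has length $<t/3$, giving a relation of bounded length, excluded on $\A^{long}$) or Lemma~2.? forces the word to pass through $\A^{wall}$. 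Hence within $\A_{good}$ the Cayley-type graph on each orbit has diameter $O(t)$ per component, so each component has size $\exp(O(t)) = \exp(O(n^{0.01})) = n^{o(1)}$.

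The steps, in order, are: (1) Partition $\A_{good}$ into $\fG$-orbits $\A_1,\dots,\A_l$ with $l\sim \frac{n^4/120}{n^2}\cdot(1+o(1)) = (1+o(1))\frac{n^2}{120}$; since every $4$-set contained in a member of $\A_{good}$ lies in at most one member, it suffices to color the members of $\A_{good}$ so that two members of the same color class meet in $\le 1$ point, and a color class may mix different orbits as long as it is independent inside each orbit. (2) Show each orbit-component of $\A_{good}$ has size $n^{o(1)}$, as above, using the length-$t$ threshold and Lemma~2.?. (3) Within a single orbit, the induced graph is a disjoint union of these tiny components, each of bounded maximum degree $\le 20\cdot 19$, hence properly colorable with $O(1)$ colors; but to get the constant $\frac16$ right globally we must reuse colors \emph{across} orbits. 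Here the idea is that the $l\sim\frac{n^2}{120}$ orbits, each needing a constant number $C_0$ of colors, would naively give $C_0 n^2/120$; to reach $n^2/6$ we instead observe that most color classes can be made much larger by combining one component from each of many orbits — i.e. we greedily build color classes that are independent across all of $\A_{good}$ simultaneously, which is possible because a component in orbit $\A_i$ only conflicts with members of $\A_{good}$ that share $\ge 2$ points with it, and each $4$-set contained in a member of $\A_{good}$ determines its member uniquely, so the total number of $\A_{good}$-members meeting a fixed member in $\ge 2$ points is $O(n^2)$ — wait, that is too large. Instead the correct bookkeeping: a fixed $r$-set is contained in $\le 1$ member of $\A_{good}$, so the graph $G$ restricted to $\A_{good}$ is \emph{locally sparse} in the sense relevant to the design-coloring argument of the Claim in Section~2; apply that Claim with the orbit-components as the sets $A_i$, bounding the cross-edges $d$ by $o(n^2)$ and the list-chromatic number of each component by $O(1)$.

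Concretely I would invoke the Claim (Section~2.1): take $H = G$, cover $V(G)=\A_{good}$ by the orbit-components $\{A_i\}$, note $\chi_{list}(G|_{A_i}) \le \Delta+1 = O(1)$ since each component has bounded degree, and bound the number of edges from a fixed $v\in\A_{good}$ to $\bigcup_{A_i\not\ni v}A_i$ by $o(n^2)$ — this last bound is where part~2 of Lemma~2.? ($|\A^{wall}(\alpha,\beta)| = o(n^2)$) and the absence of short dependences are used to rule out that $v$ has too many neighbors outside its own component. Then $\chi(G) \le O(1) + o(n^2)$... which is $o(n^2)$, not $(1+o(1))\frac{n^2}{6}$. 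So the genuine argument must be subtler: the true coloring uses the \emph{equivalence-class} idea from Theorem~\ref{BKR} lifted through the design $\A$, where colors correspond to orbits of $5$-sets (there are $\sim\frac{n^4}{120\,|\fG|}\sim\frac{n^2}{120}$ of these) further refined by a bounded factor coming from the within-orbit coloring; I would prove that the within-orbit chromatic number is exactly the bounded constant needed so that $(\text{number of orbits})\times(\text{within-orbit colors}) = (1+o(1))\frac{n^2}{6}$, using that the bounded-degree Cayley components can in fact be colored with a number of colors matching the fractional/counting bound because they are unions of translates of a fixed small configuration.

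\textbf{Main obstacle.} The hard part is the last step: getting the \emph{constant} $\frac16$ rather than merely $o(n^2)$ or a worse constant. This requires that the bounded-size Cayley components in each good orbit not only have bounded chromatic number but have chromatic number small enough — and reused coherently enough across the $\sim n^2/120$ orbits — that the product lands exactly at $(1+o(1))n^2/6$. Controlling this means analyzing precisely which dependences $\omega$ of length $\le t$ can vanish on a generic $A$ (the algebraic heart, deferred to the sections on short dependences) and showing the resulting local structure is, up to $o(1)$, as efficient as the trivial star/design lower bound. Handling $\A^{short}$, $\A^{wall}$, and the $4$-sets not covered by $\A$ contributes only $o(n^2)$ by the maximum-degree bound and the list-coloring lemma $\chi_{list}(G(n,r,s)) = O(n^{s+1}\log n)$, so those are routine; the delicate constant is the whole game.
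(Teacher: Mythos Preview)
Your outline correctly isolates the orbit decomposition and the arithmetic $l \sim n^2/120$, and you are right that the whole difficulty is to color each orbit $\A_j \setminus \A^{wall}$ in a number of colors that multiplies out to $(1+o(1))n^2/6$. The precise target is $20$ colors per orbit. What is missing from your proposal is the actual mechanism that achieves $20$, and neither of the two mechanisms you sketch can reach it.

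First, the ``tiny components plus bounded degree'' route cannot give the constant. Even granting that each connected component of $G|_{\A_j\setminus\A^{wall}}$ has bounded size, the maximum-degree bound only yields $\chi \le 20\cdot 19+1$, so you would end up with roughly $380\cdot n^2/120 \approx 3.2\,n^2$, off by a factor of about $19$. (Also, your claim that components have diameter $O(t)$ is not what the wall lemma provides: it controls \emph{closed} walks with $\omega(A)=1$, not arbitrary geodesics.) Second, the route via the covering Claim with orbit-components as the $A_i$ fails for the reason you yourself flagged: a fixed $5$-set $v$ meets $\Theta(n^2)$ other members of $\A_{good}$ in $\ge 2$ points, and almost all of these lie in \emph{different} orbits, so the cross-parameter $d$ is $\Theta(n^2)$, not $o(n^2)$; the Claim then gives nothing.

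The paper's argument is genuinely different in that it does not bound component sizes or cross-edges at all. It constructs an explicit coloring $c:\A_j\setminus\A^{wall}\to\mathbb Z_{20}$. Fix a bijection $\psi:\{g_{ij}(A):i\neq j\}\to\mathbb Z_{20}$ with the symmetry $\psi(g_{ij})\equiv\psi(g_{ji})+10\pmod{20}$, and for $hA$ in a component with basepoint $h_iA$, write $hA = g_1^{-1}f_1\cdots g_q^{-1}f_q\,h_iA$ along a path inside the component and set $c(hA)=\sum_s(\psi(f_s)-\psi(g_s))\pmod{20}$. Properness is immediate once $c$ is well-defined; well-definedness is exactly where the two structural lemmas enter. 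A closed walk gives a dependence $\omega=(d_{ij})$ with $\omega(A)=1$. If its length is $<t$, then (since $A\in\A^{long}$) $\omega$ is trivial, and the lemma on trivial dependences forces $d_{ij}+d_{ji}=0$ and $\sum_{i<j}d_{ij}$ even, from which the $\psi$-sum collapses to $10\sum_{i<j}d_{ij}\equiv 0\pmod{20}$. If its length is $\ge t$, the wall lemma forces the walk through $\A^{wall}$, contradicting that it stayed in the component. This is the missing idea: the $20$ is not a degree bound but comes from an algebraic ``potential'' on the Cayley graph, and the structure of trivial dependences (antisymmetry of $d_{ij}+d_{ji}$ and parity of $\sum d_{ij}$) is exactly what makes $\mathbb Z_{20}$ the right target.
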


Given this lemma we can color in $(1+o(1)) \frac{n^2}{6}$ colors vertices of $G(n, 4, 2)$ which are contained in some set from $\A_{good}$ (just like in the proof of Theorem \ref{cor0}). Denote the set of all remaining vertices by $U$, it remains to prove that this set can be colored in a small number of colors.

\begin{lemma} \label{lm2}
$\chi(G(n, 4, 2)|_{U}) = o(n^2)$.
\end{lemma}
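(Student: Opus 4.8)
The plan is to show that the set $U$ of vertices of $G(n,4,2)$ not covered by any set of $\A_{good}$ is small enough — and structured enough — to be colored with $o(n^2)$ colors. A vertex $A \in {[n] \choose 4}$ lands in $U$ for one of three reasons: either no element of $\A$ contains $A$ at all, or every element of $\A$ containing $A$ lies in $\A^{short}$, or every such element lies in $\A^{wall}$. I will treat these three contributions separately and bound the chromatic number of each induced subgraph, then combine using the bound $\chi(G(n,4,2)|_U) \le \sum \chi(\cdot)$ over a partition of $U$, or more efficiently the covering Claim stated in Section \ref{s2}.

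First I would handle the $4$-sets not contained in any element of $\A$. Recall $\A$ consists of $5$-tuples of pairwise non-collinear vectors summing to zero; a $4$-set $\{v_1,v_2,v_3,v_4\}$ of pairwise non-collinear nonzero vectors fails to extend iff $v_5 := -(v_1+v_2+v_3+v_4)$ is zero or collinear with some $v_i$ — that is, $v_1+\dots+v_4 = 0$ or $v_1+\dots+v_4 \in \bigcup_i \langle v_i\rangle$. For fixed $v_1,v_2,v_3$ this forces $v_4$ to lie in a union of $O(1)$ lines, so the number of such $4$-sets is $O(n^3) = o(n^4) = o({n \choose 4})$; moreover the induced subgraph on them has maximum degree $o(n^2)$, which I would verify by the same line-counting argument (fixing the $2$-set shared by an edge and counting extensions), giving a greedy coloring with $o(n^2)$ colors. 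Actually the cleanest route here is: these $4$-sets, together with the star-type structure, can be colored by the maximum-degree bound provided I confirm the degree inside this part of $U$ is $o(n^2)$.

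For the $\A^{short}$-contribution I would use that $\A^{short}_{ord}$ is cut out by the vanishing of $\omega(A)=1$ for some nontrivial dependence of length at most $t = n^{0.01}$. The number of dependencies of length at most $t$ is $t^{O(1)}$, and for each fixed nontrivial $\omega$ the equation $\omega(A)=1$ is a nontrivial polynomial condition on the $8$ coordinates defining $A \in \A_{ord}$ (nontriviality of $\omega$ is exactly what guarantees it is not identically $1$), so by a Schwartz–Zippel / bounded-degree-hypersurface estimate the solution set has size $O(t^{O(1)} n^7) = o(n^8)$, hence $|\A^{short}| = o(n^4)$. A $4$-set lying in $U$ via this route is contained only in $\A^{short}$-elements, and counting the number of such $4$-sets (each contained in at most $n^{O(1)}$ five-sets, most of which must be short) gives $o(n^4)$ many, with induced degree $o(n^2)$ by the same density argument. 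The $\A^{wall}$-contribution is immediate from part 2 of Lemma (the wall lemma): for any pair $\alpha,\beta$, $|\A^{wall}(\alpha,\beta)| = o(n^2)$, so the induced subgraph on $4$-sets covered only by $\A^{wall}$ has every edge (which fixes a common $2$-set $\{\alpha,\beta\}$) incident to $o(n^2)$ others, so maximum degree $o(n^2)$, and a greedy coloring finishes it.

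The main obstacle I anticipate is making the Schwartz–Zippel step fully rigorous: one must check that each nontrivial dependence $\omega$ really does define a proper subvariety of $\A_{ord}$ — i.e. that $\omega(A)=1$ is not automatically satisfied — and control the degree of that subvariety uniformly in $\omega$ so that the union over all $t^{O(1)}$ short dependencies still has density $o(1)$; the length bound $2t = \sum_{i<j}|d_{ij}+d_{ji}|$ is what keeps the degree $O(t)$, but one should double-check that distinct dependencies cannot conspire, and that the failure to extend a $4$-set into $\A_{good}$ is correctly translated into membership of its candidate $5$-sets in $\A^{short} \cup \A^{wall}$. Once the three pieces are each seen to induce a subgraph of maximum degree $o(n^2)$ (equivalently, covered by families whose relevant fibers have size $o(n^2)$), the covering Claim from Section \ref{s2} assembles them into the desired bound $\chi(G(n,4,2)|_U) = o(n^2)$.
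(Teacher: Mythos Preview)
Your treatment of the $\A^{short}$ and $\A^{wall}$ contributions is essentially correct and matches the paper (the per-pair estimate $|\A^{short}(\alpha,\beta)|=o(n^2)$ you need for the degree bound is exactly Lemma~\ref{short}). The gap is in your first case.

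When you handle ``$4$-sets not contained in any element of $\A$'' you analyze only $4$-sets of pairwise non-collinear vectors whose completion $v_5=-(v_1+\cdots+v_4)$ is bad. You omit the $4$-sets that themselves contain a collinear pair $\alpha,\beta$; these too lie in no element of $\A$. For such an $X=\{\alpha,\beta,x_3,x_4\}$, every $Y=\{\alpha,\beta,y_3,y_4\}$ with $\{y_3,y_4\}\cap\{x_3,x_4\}=\varnothing$ is also in $U$ and satisfies $|X\cap Y|=2$, so the induced degree at $X$ is $\Theta(n^2)$, not $o(n^2)$. Hence the greedy/max-degree bound you propose gives only $O(n^2)$ colors, not $o(n^2)$, and the argument as written does not close.

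This is exactly the point where the paper brings in the extra ingredient you are missing: the induced subgraph on $U(\alpha,\beta)=\{X:\alpha,\beta\in X\}$ is (a subgraph of) $G(n,2,0)$, which has $\chi_{list}=O(n\log n)$ by Lemma~\ref{list}. The paper then applies the covering Claim (Claim~\ref{general}) to the cover of $U_1$ by the sets $U(\alpha,\beta)$ (over collinear pairs) together with the piece $U^*$ that you did handle, checking that a vertex has only $O(n^{3/2})$ neighbors in pieces not containing it. This yields $\chi(G(n,4,2)|_{U_1})=O(n^{3/2})=o(n^2)$. So the fix is not a matter of more careful counting: you need the list-chromatic estimate (or an equivalent substitute) to deal with the collinear-pair sets, since their induced degree is genuinely of order $n^2$.
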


Clearly, the combination of these lemmas implies Theorem \ref{main}.

The rest of the proof is organized as follows. In Section \ref{ss3} we prove auxiliary results about trivial and nontrivial dependencies. In Section \ref{ss4} we construct the set $\A^{wall}$. In Sections \ref{ss5} and \ref{ss6} we prove Lemmas \ref{lm1} and \ref{lm2} respectively.

\subsection{Dependencies}\label{ss3}

We begin with a simple observation.
\begin{claim}\label{dep} 
Take $A \in \A_{ord},~h\in \fG$ and arbitrary dependence $\omega$. Then $\omega(hA) = \omega(A)$. 
\end{claim}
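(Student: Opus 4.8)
The statement to prove is Claim~\ref{dep}: for $A \in \A_{ord}$, $h \in \fG$ and any dependence $\omega$, we have $\omega(hA) = \omega(A)$.

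\textbf{Approach.} The plan is to unwind the definition of $\omega$ in \eqref{omega} and track how each factor $\det(g_{ij}(A))$ transforms when $A$ is replaced by $hA$. If $A = (a_1, \dots, a_5)$, then $hA = (ha_1, \dots, ha_5)$, so $g_{ij}(hA) = g_{ha_i, ha_j}$, the linear map sending $e_1, e_2$ to $ha_i, ha_j$. The key identity is that $g_{ha_i, ha_j} = h \circ g_{a_i, a_j}$, since both maps agree on the basis $e_1, e_2$ (the right-hand side sends $e_1 \mapsto h(a_i)$ and $e_2 \mapsto h(a_j)$). Therefore $\det(g_{ij}(hA)) = \det(h) \det(g_{ij}(A))$ for every pair $(i,j)$ with $i \neq j$.

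\textbf{Key step.} Substituting into \eqref{omega},
\[
\omega(hA) = \prod_{i \neq j} \big(\det(h)\det(g_{ij}(A))\big)^{d_{ij}} = \det(h)^{\sum_{i \neq j} d_{ij}} \cdot \prod_{i \neq j} \det(g_{ij}(A))^{d_{ij}} = \det(h)^{0} \cdot \omega(A) = \omega(A),
\]
where we used the defining property $\sum_{i \neq j} d_{ij} = 0$ of a dependence. This is precisely where the condition $\sum_{i\ne j} d_{ij} = 0$ in the definition of a dependence is needed; without it the claim would fail, picking up a factor $\det(h)$ to some nonzero power.

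\textbf{Main obstacle.} There is no real obstacle here — the claim is a routine bookkeeping computation once the composition identity $g_{ha_i, ha_j} = h\, g_{a_i, a_j}$ is observed. The only mild point of care is to make sure that $hA$ still lies in $\A_{ord}$ so that $\omega(hA)$ is well-defined: since $h \in \fG$ is invertible, it preserves non-collinearity of pairs of vectors and sends $V \setminus 0$ to $V \setminus 0$, and clearly $\sum_i ha_i = h(\sum_i a_i) = 0$; hence $hA \in \A_{ord}$. With that in place, the displayed computation completes the proof.
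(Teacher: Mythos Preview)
Your proof is correct and follows essentially the same approach as the paper's own proof: both observe the identity $g_{ij}(hA)=h\,g_{ij}(A)$, substitute into \eqref{omega}, and use $\sum_{i\neq j}d_{ij}=0$ to cancel the factor $\det(h)^{\sum d_{ij}}$. Your additional remark that $hA\in\A_{ord}$ is a welcome clarification the paper leaves implicit.
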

\begin{proof}
Note that for any $g \in \fG$~ $g_{ha, hb} = hg_{a, b}$ and that $g_{ij}(hA) = g_{ha_i, ha_j}=hg_{a_i, a_j} = hg_{ij}(A)$.  So, 
$$
\omega(hA) = \prod_{i \neq j} \det(g_{ij}(hA))^{d_{ij}} = \prod_{i \neq j} \det(hg_{ij}(A))^{d_{ij}} =\det(h)^{\sum d_{ij}} \prod_{i \neq j} \det(g_{ij}(A))^{d_{ij}} = \omega(A).
$$
\end{proof}

Denote by $\A_{ord}(\alpha, \beta)$ the set of sequences $A \in \A_{ord}$ such that $A = (\alpha, \beta, x_1, x_2, x_3)$ for some $x_i \in V$. 
The next lemma states that the set of sufficiently degenerate sequences $A \in \A_{ord}$ is rather sparse.

\begin{lemma}\label{short}
$|\A^{short}_{ord}(\alpha, \beta)| = O(n^{1.7})$ for any linearly independent $\alpha, \beta \in V \setminus 0$.
\end{lemma}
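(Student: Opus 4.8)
The plan is to fix the linearly independent pair $\alpha,\beta$ and to count sequences $A=(\alpha,\beta,x,y,z)\in\A_{ord}$ for which some nontrivial dependence $\omega$ of length at most $t$ satisfies $\omega(A)=1$. Since $x+y+z=-\alpha-\beta$ is forced, a sequence in $\A_{ord}(\alpha,\beta)$ is determined by the pair $(x,y)$, so we are really estimating a subset of the $O(n^2)$ possible pairs $(x,y)$. I would first bound the number of dependencies of length at most $t$: a dependence of length $t$ is given by integers $(d_{ij})$ with $\sum_{i<j}|d_{ij}+d_{ji}|=2t$, and since there are only $\binom{5}{2}=10$ symmetrized coordinates, the number of such $\omega$ is at most $(Ct)^{O(1)}$, i.e. $n^{o(1)}$ (using $t=n^{0.01}$). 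So it suffices to show that for a \emph{single} fixed nontrivial dependence $\omega$ of length $\le t$, the number of pairs $(x,y)$ with $\omega(\alpha,\beta,x,y,z)=1$ is $O(n^{1.7-o(1)})$, or more comfortably $O(n^{1.6})$, so that the union bound over all $\omega$ still gives $O(n^{1.7})$.

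The heart of the matter is that $\omega(A)=1$ is a polynomial equation in the coordinates of $x,y$ (after clearing denominators coming from negative exponents $d_{ij}$). Each factor $\det(g_{ij}(A))$ is, up to sign, the $2\times 2$ determinant $[a_i,a_j]$ of the corresponding pair of vectors; writing $z=-\alpha-\beta-x-y$, every such determinant is an affine-linear form in $(x,y)\in V^2\cong\FF_p^4$. Thus $\omega(A)=1$ becomes $P(x,y)=Q(x,y)$ where $P,Q$ are products of these linear forms, of total degree $O(t)$. The nontriviality of $\omega$ (as established in Section \ref{ss3}, which I am allowed to cite) should guarantee that $P-Q$ is not identically zero on the relevant variety. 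I would then invoke the Schwartz--Zippel / Lang--Weil type bound: a nonzero polynomial of degree $d$ in two-dimensional affine space over $\FF_p$ has at most $dp$ zeros, but here I have four variables $(x,y)$, so naively I get $O(tp^3)=O(n^{1.5+o(1)}\cdot n^{0.5})$... — the point is that one must be careful about which variables are free. Actually the cleaner route: fix $x$ (there are $\le n$ choices); then $z=-\alpha-\beta-x-y$ depends on $y$ only, and $\omega(A)=1$ becomes a one-variable polynomial identity in $y\in\FF_p^2$, i.e. a curve of degree $O(t)$ in the plane, hence $O(tp)=O(n^{0.5+0.01})$ points, unless the polynomial vanishes identically in $y$ for that particular $x$. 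Summing, the "generic $x$" contribution is $n\cdot O(n^{0.51})=O(n^{1.51})$, well within budget. The remaining work is to bound the number of \emph{exceptional} $x$ for which $\omega(\alpha,\beta,x,\cdot,\cdot)$ is identically $1$: this is again a polynomial condition on $x$ of degree $O(t)$, so either it holds for $O(tp)=O(n^{0.51})$ values of $x$ (each contributing at most $n$ pairs, total $O(n^{1.51})$), or it holds identically in $x$ too — but then $\omega$ would be trivial on all of $\A_{ord}(\alpha,\beta)$, and one must rule this out, presumably again using the dependence-theory from Section \ref{ss3} together with Claim \ref{dep} (translation invariance lets one move $\alpha,\beta$ around, so triviality on one fiber propagates). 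Combining the union bound over the $n^{o(1)}$ dependencies gives $|\A^{short}_{ord}(\alpha,\beta)|=O(n^{1.51+o(1)})=O(n^{1.7})$.

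The main obstacle I anticipate is the bookkeeping around triviality: one must show that for a nontrivial $\omega$, the induced polynomial in $(x,y)$ over the fiber $\A_{ord}(\alpha,\beta)$ is genuinely nonzero — not merely nonzero as a formal expression in all of $V^5$, but nonzero after the substitution $v_1=\alpha$, $v_2=\beta$, $v_5=-\alpha-\beta-x-y$ and restriction to the open set where all pairs are non-collinear. It is conceivable that a dependence is nontrivial globally yet collapses on this particular $2$-parameter slice, and handling that requires the structural classification of trivial dependencies that Section \ref{ss3} is presumably devoted to. A secondary nuisance is that the exponents $d_{ij}$ can be negative, so one works in the function field and must track that the linear forms in the denominators do not vanish on the relevant sequences (which is exactly where the non-collinearity hypotheses in the definition of $\A_{ord}$ pay off); this only affects the Schwartz--Zippel bound by lower-order terms. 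Everything else is a routine union bound and a degree count.
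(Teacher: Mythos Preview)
Your proposal is correct and follows essentially the same approach as the paper: count the $O(t^{10})$ functionally distinct dependencies, use Claim~\ref{dep} ($\fG$-invariance, not ``translation'' invariance) to show that a nontrivial $\omega$ cannot vanish identically on the fiber $\A_{ord}(\alpha,\beta)$, and then apply Schwartz--Zippel to the polynomial $R=P-Q$ of degree $O(t)$. The only cosmetic difference is that the paper applies the Sparse Zeros Lemma once in $\FF_p^4$ to get $\le 4tp^3$ solutions directly, rather than your two-stage fix-$x$-then-bound-in-$y$ argument; also note your ``$n^{o(1)}$'' for the number of dependencies is really $n^{0.1}$, but your final arithmetic $O(n^{1.51}\cdot n^{0.1})=O(n^{1.7})$ is the same as the paper's $O(n^{1.61})$.
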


\begin{proof}
Note that there are at most $2^{30}t^{10}$ dependencies of length at most $t$ which determine different functions $\A \rightarrow \FF_p$. Indeed, take a dependence $\omega = (d_{ij})$ of length $\le t$, then from (\ref{omega}) we have: 
$$
\omega = \prod_{ij} \det(g_{ij}^{d_{ij}}) = (-1)^s\prod_{i<j} \det (g_{ij})^{d_{ij}+d_{ji}}
$$
because $\det g_{ij} = -\det g_{ji}$. So the dependence $\omega$ can be recovered as a function from the values of $d_{ij}+d_{ji}$ and $(-1)^s$. By definition, $|d_{ij}+d_{ji}| \le 2t$ so there are at most $2 \cdot (4t)^{10} < 2^{30} t^{10}$ choices of $\omega$ which are different as functions. 

For each nontrivial $\omega$ of length at most $t$ we bound the number of sequences $A \in \A_{ord}(\alpha, \beta)$ satisfying $\omega(A) = 1$. Suppose that $\omega(A) = 1$ for all $A \in\A_{ord}(\alpha, \beta)$. Consider a sequence $A = (A_1, \ldots, A_5) \in \A_{ord}$, then there exists $h \in \fG$ such that $hA_1 = \alpha, hA_2 = \beta$ so $hA \in \A_{ord}(\alpha, \beta)$. By Claim \ref{dep} and our assumption, $\omega(A) = \omega(hA) = 1$, so $\omega$ is trivial. A contradiction.

Now we note that $\omega$ determines a rational function $\tilde\omega: \FF_p^4 \rightarrow \FF_p$: let $\tilde \omega(x, y) = \omega(\alpha, \beta, x, y, -\alpha-\beta-x-y)$. Each determinant is a degree $2$ polynomial, therefore, $\tilde \omega(x, y) = \frac{P(x, y)}{Q(x, y)}$ where $P$ and  $Q$ have degrees at most $4t$. The number of $A\in \A_{ord}(\alpha, \beta)$ for which $\omega(A) =1$ is less than the number of solutions of the equation $R(x, y) = P(x, y)-Q(x, y)=0$. From the previous paragraph we derive that $R$ is a nontrivial polynomial of degree at most $4t$. By Sparse Zeros Lemma (\cite{BF}, p. 86) $R$ has at most $4t p^3$ roots. 

Altogether, we have $|\A_{ord}^{short}(\alpha, \beta)| \le (2^{30}t^{10}) (4t p^3) \le 2^{34} n^{1.61} = O(n^{1.7})$. 
\end{proof}

Now  we prove that short trivial dependencies are indeed ``trivial".

\begin{lemma}\label{trivial}
Let $\omega = (d_{ij})$ be a trivial dependence of length at most $t = n^{0.01}$. Then $d_{ij}+d_{ji} = 0$ for any $i \neq j$ and the sum $D = \sum_{i < j} d_{ij}$ is even. 
\end{lemma}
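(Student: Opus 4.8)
The plan is to exploit the fact that a trivial dependence, as a rational function on the whole variety $\A_{ord}$, must equal the constant $1$. I would start by using the $\fG$-action: since $\omega(hA)=\omega(A)$ for all $h\in\fG$ by Claim \ref{dep}, it suffices to test triviality on the slice $\A_{ord}(\alpha,\beta)$ for one fixed pair of linearly independent vectors $\alpha,\beta$. On this slice, write as before $\tilde\omega(x,y)=\omega(\alpha,\beta,x,y,-\alpha-\beta-x-y)=P(x,y)/Q(x,y)$, a ratio of polynomials of degree $O(t)$ in the four coordinates. Triviality means $P-Q$ vanishes on all of $\FF_p^4$ minus the (low-degree) locus where some pair of the five vectors is collinear; since $\deg(P-Q)=O(t)\ll p$, the Sparse Zeros Lemma (or just the Schwartz–Zippel bound) forces $P-Q$ to be the zero polynomial, so in fact $\tilde\omega\equiv 1$ as a rational function, not merely as a function on $\FF_p$-points. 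Hence we may argue over an algebraically closed field, or even work formally with the determinants $\det g_{ij}$ as elements of the function field of $\A_{ord}$.

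Next I would identify the multiplicative relations among the $\det g_{ij}$. Write $A=(a_1,\dots,a_5)$ and let $[ij]:=\det(a_i,a_j)=\det g_{ij}(A)$, so $[ij]=-[ji]$. I would first reduce a general $\omega=(d_{ij})$ using only the antisymmetry $[ij]=-[ji]$ to the form $(-1)^{S}\prod_{i<j}[ij]^{\,e_{ij}}$ with $e_{ij}=d_{ij}+d_{ji}$ and $S=\sum_{i<j}|\{d_{ji}<0 \text{ contributions}\}|$ — more precisely $\prod[ij]^{d_{ij}}[ji]^{d_{ji}}=(-1)^{\sum d_{ji}}\prod[ij]^{e_{ij}}$, so $\omega=(-1)^{D'}\prod_{i<j}[ij]^{e_{ij}}$ for $D'=\sum_{i<j}d_{ji}$. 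The claim then splits into two assertions: (a) the monomial $\prod_{i<j}[ij]^{e_{ij}}$ is identically $1$ iff all $e_{ij}=0$; and (b) given $e_{ij}=0$ for all $i<j$, the constraint $\sum_{i\neq j}d_{ij}=0$ together with triviality forces $(-1)^{D'}=1$, and one checks $D=\sum_{i<j}d_{ij}$ has the same parity as $D'$ (indeed $D+D'=\sum_{i<j}e_{ij}=0$, so $D=-D'$ and they have equal parity), giving the stated conclusion that $D$ is even.

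For (a) — which I expect to be the main obstacle — the point is that the ten functions $[ij]$, $1\le i<j\le 5$, are "multiplicatively independent up to the one obvious syzygy," and here there is in fact \emph{no} syzygy of low degree: any multiplicative relation $\prod[ij]^{e_{ij}}=1$ with $\sum_{i<j}|e_{ij}|$ small must be trivial. I would prove this by specialization: choose a one-parameter family of points in $\A_{ord}(\alpha,\beta)$ along which exactly one chosen bracket $[kl]$ degenerates to order $1$ (e.g. move $x$ and $y$ so that $a_k$ and $a_l$ become collinear while all other pairs stay generic), and read off the $[kl]$-adic valuation of the monomial: it equals $e_{kl}$, which must therefore be $0$; ranging over all pairs $\{k,l\}$ (always keeping the remaining four vectors pairwise non-collinear, which is possible since we only need to degenerate one bracket at a time and $p$ is large) kills every $e_{ij}$. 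One must be slightly careful that such degenerations are reachable within $\A_{ord}$ as a limit — but since we have already passed to the algebraic closure and to rational-function identities, valuations at the boundary divisors $\{[kl]=0\}$ of the ambient space $\FF^{4}$ are perfectly legitimate to use, and each $\{[kl]=0\}$ is an irreducible hypersurface not contained in the union of the others. This finishes (a), and with it the lemma.
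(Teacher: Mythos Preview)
Your approach is essentially the same as the paper's: use the degree bound together with Schwartz--Zippel/Sparse Zeros to upgrade ``$\omega=1$ on $\FF_p$-points'' to a rational-function identity, then exploit the multiplicative independence of the ten brackets $[ij]=\det(a_i,a_j)$. Your valuation argument (degenerate one bracket at a time) is just the geometric phrasing of the paper's one-line observation that the $[ij]$ are pairwise coprime irreducible polynomials, so unique factorization forces all exponents $e_{ij}=d_{ij}+d_{ji}$ to vanish and leaves $(-1)^D=1$.

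There is one small but genuine wrinkle in your write-up. You pass to the slice $\A_{ord}(\alpha,\beta)$ and then speak of valuations at the divisors $\{[kl]=0\}$ in ``the ambient space $\FF^4$''. On that $4$-dimensional slice, $[12]=\det(\alpha,\beta)$ is a nonzero \emph{constant}, so it has no associated divisor and your degeneration argument cannot touch $e_{12}$. The paper avoids this by skipping the slice reduction entirely: it applies Sparse Zeros directly in the $8$ free coordinates of $(a_1,\dots,a_4)$, where all ten $[ij]$ are honest irreducible quadratics and the UFD argument goes through uniformly. Your version is easily repaired either by working in the full $8$-variable space from the start, or by noting that $\sum_{i<j}e_{ij}=\sum_{i\ne j}d_{ij}=0$, so once $e_{ij}=0$ for the nine pairs $\{i,j\}\ne\{1,2\}$ you get $e_{12}=0$ for free. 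With that patch the argument is complete; but the paper's route is shorter and cleaner, and the preliminary appeal to Claim~\ref{dep} buys you nothing here.
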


\begin{proof}
As we have mentioned before, we have
$$
\omega(A) = \prod_{i \neq j} \det(g_{a_i, a_j})^{d_{ij}} = (-1)^D \prod_{i < j} \det(g_{a_i, a_j})^{d_{ij} + d_{ji}}.
$$
Analogously to the previous lemma, $\omega$ may be written as a fraction $\frac{P(x_1, \ldots, x_4)}{Q(x_1, \ldots, x_4)}$ of polynomials in $8$ variables of degrees at most $4t$ (each $x_i$ represents a vector of two variables $(x_i^1, x_i^2)$). As before, we consider the polynomial $R=P-Q$. Since the dependence $\omega$ is trivial, for any vectors $x_1, x_2, x_3, x_4 \in V$ such that $(x_1, x_2, x_3, x_4, -x_1-x_2- x_3-x_4) \in \A_{ord}$ it follows that $R(x_1, x_2, x_3, x_4) = 0$. Condition $(x_1, x_2, x_3, x_4, -x_1-x_2- x_3-x_4) \in \A_{ord}$ means that these five vectors are in general position. An easy calculation yields that the number of such tuples is at least $p^8-10p^7 > 4tp^7$ so by Sparse Zeros Lemma $R$ must vanish, i.e. $1 \equiv \frac{P}{Q} \equiv (-1)^D \prod_{i < j}\det(x_i, x_j)^{d_{ij} + d_{ji}}$. But determinants $\det(x_i, x_j)$ are pairwise coprime for $i < j$ so each multiple must be equal to $1$ (indeed, $\det(x_i, x_j)$ is an irreducible polynomial of degree $2$ in variables $x_i^1, x_i^2, x_j^1, x_j^2$; two different polynomials $\det(x_i, x_j)$ can not be proportional). The lemma follows. 

\end{proof}

\subsection{Construction of $\A^{wall}$}\label{ss4}

In this section we prove the following crucial lemma:

\begin{lemma} \label{wall}
There are sets $\A^{wall}_{ord} \subset \A_{ord}$ and $\A^{wall}  = \pi(\A^{wall}_{ord})$ such that: 

1. Choose any $A \in \A_{ord}^{long}$ and $g_1, \ldots, g_m \in E$. Suppose that the dependence $\omega = \det(g_1 g_2 \ldots g_m)$ is of length at least $t/3$ and $\omega(A)=1$. Then for some $i$ we have $g_1 g_{2} \ldots g_i(A)A \in \A^{wall}_{ord}$.

2. For any pair of vectors $\alpha, \beta$ we have $|\A^{wall}(\alpha, \beta)| = o(n^2)$.
\end{lemma}

The idea behind the proof is very simple: in each orbit, we sample a random set of relatively small ``boxes" which will almost surely cover most of the orbit. By our assumption that the orbit is in $\A^{long}$ we conclude that a ``nontrivial" cycle does not fit in a box, so it must intersect its boundary. We put $\A^{wall}$ to be the union of the boundaries of the sampled boxes.

\begin{proof}
Consider the orbit decomposition of $\A_{ord}^{long}$ under the action of $\fG$: $\A_{ord}^{long} = \A_1 \cup \ldots \cup \A_l$. Note that by Lemma \ref{short} $|\A_{ord}^{long}| \sim |\A_{ord}| \sim n^4$, also we know that $|\fG| \sim n^2$, consequently, $l \sim \frac{n^4}{|\fG|} \sim n^2$. Choose a representative $A_j \in \A_j$ for each $j$.

We start by constructing $\A^{wall}_{ord}$ in each orbit separately:

\begin{claim}\label{boxy}
For any $j=1, \ldots, l$ there is a set $\A_j^{wall} \subset \A_j$ such that $|\A_j^{wall}| = O(n^2 t^{-0.5})$ and the following holds for any $A \in \A_j$ and $g_1, \ldots, g_m \in E$. Suppose that the dependence $\omega = \det(g_1 g_2 \ldots g_m)$ is of length at least $t/3$ and $\omega(A)=1$. Then for some $i$ we have $g_1 g_{2} \ldots g_i(A)A \in \A^{wall}_j$.
\end{claim}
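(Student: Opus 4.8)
The plan is to present the subgraph of $G$ induced on each orbit as a Cayley graph on $\fG$ and then to cut that Cayley graph along its abelian quotient — the determinant map — which is vastly smaller than the orbit itself.

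\emph{Step 1 (the orbit as a Cayley graph).} Since $A_j\in\A_{ord}$ has two linearly independent coordinates, its stabiliser in $\fG$ is trivial, so $h\mapsto hA_j$ identifies $\A_j$ with $\fG$. Under this identification the subgraph of $G$ induced on $\A_j$ is the Cayley graph $\mathrm{Cay}(\fG,S_j)$ with connecting set $S_j=\{e(A_j):e\in E\}$: indeed $hA,h'A\in\fG A_j$ share at least two vectors exactly when $h^{-1}h'=e(A)$ for some $e\in E$, and since $g_{pq}(hA)=hg_{pq}(A)$ (as in the proof of Claim~\ref{dep}) the element $e(A)$ does not depend on which point $A$ of the orbit is used, so $S_j$ is well defined; moreover $|S_j|\le|E|=O(1)$ and $S_j=S_j^{-1}$. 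For $A\in\A_j$ and $g_1,\dots,g_m\in E$ set $Q_i:=g_i(A)g_{i+1}(A)\cdots g_m(A)\cdot A$ for $1\le i\le m$ and $Q_{m+1}:=A$; then $Q_{m+1}\to Q_m\to\cdots\to Q_1$ is a walk in $\mathrm{Cay}(\fG,S_j)$ (consecutive vertices are joined by $g_i(A_j)\in S_j$), so the assertion ``$g_ig_{i+1}\cdots g_m(A)A\in\A^{wall}_j$ for some $i$'' is exactly the statement that this walk meets $\A^{wall}_j$.

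\emph{Step 2 (reduction to cutting the determinant).} Consider $\det\colon\fG\to\FF_p^{\times}$, and note $|\FF_p^{\times}|=p-1=\Theta(\sqrt n)$ is tiny next to $|\fG|\sim n^2$. Put $D_j:=\det(S_j)\subset\FF_p^{\times}$, so $|D_j|=O(1)$. I claim it suffices to build $\A^{wall}_j$ with $|\A^{wall}_j|=O(|\fG|\,t^{-1/2})$ so that every connected component of $\mathrm{Cay}(\fG,S_j)\setminus\A^{wall}_j$ has $\det$-image of size at most $t$. Indeed, let a walk as in Step~1 avoid $\A^{wall}_j$; then all $Q_i$ lie in one component, so $\det Q_{m+1},\dots,\det Q_1$ is a walk in $\mathrm{Cay}(\FF_p^{\times},D_j)$ confined to at most $t$ vertices, and it is closed, since its total multiplier is $\prod_i\det g_i(A)=\omega(A)=1$. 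Peeling off first-return loops repeatedly writes this closed walk as a concatenation of closed sub-walks of length at most $t$; each such sub-walk is a relation $\prod(\text{labels})=1$ and, read through formula~(\ref{omega}), is a dependence of length at most $t$ equal to $1$ at $A_j$, hence trivial because $A_j\in\A^{long}$. Multiplying the pieces, $\omega$ itself is trivial, so $\mathrm{len}(\omega)=0<t/3$ by Lemma~\ref{trivial}, contradicting the hypothesis. Thus the whole Claim is reduced to a purely combinatorial statement about cutting an abelian Cayley graph on $\FF_p^{\times}$.

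\emph{Step 3 (random boxes) and the main obstacle.} Taking $\A^{wall}_j=\det^{-1}(Z_j)$ for $Z_j\subset\FF_p^{\times}$, the size condition becomes $|Z_j|=O\big((p-1)\,t^{-1/2}\big)$ and the component condition becomes: every component of $\mathrm{Cay}(\FF_p^{\times},D_j)\setminus Z_j$ has at most $t$ vertices. To produce such a $Z_j$ I would sample a random system of ``boxes'' in $\FF_p^{\times}\cong\mathbb Z/(p-1)$ — random translates of an interval grid, refined by further grids, one for each of the finitely many distance scales occurring among the elements of $D_j$ — and let $Z_j$ be the union of the (suitably thickened) box walls together with the part of $\FF_p^{\times}$ covered by no finest box. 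A first-moment estimate over the random offsets should give, with positive probability, that the boxes cover all but an $o(1)$ fraction of $\FF_p^{\times}$, that the resulting $Z_j$ has the required size, and that every surviving component sits inside one finest box and hence has at most $t$ vertices. The hardest point — and the one I expect to absorb most of the work — is exactly this boundary bookkeeping: some elements of $D_j$ may be large in $\mathbb Z/(p-1)$, so that a single $D_j$-step could skip over a thin wall without landing in it, and keeping the total thickness needed to block such jumps down to $O\big((p-1)t^{-1/2}\big)$ is what forces the multi-scale construction. It is here that one must again invoke $A_j\in\A^{long}$, now to bound the multiplicative interaction of the elements of $D_j$ up to length $t$, and thereby keep the number of relevant scales, and hence the total wall, under control.
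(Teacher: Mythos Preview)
Your Steps~1 and~2 are sound, and the peel-into-simple-loops argument in Step~2 (apply Lemma~\ref{trivial} to each loop, then add the resulting identities $d_{ij}^{(k)}+d_{ji}^{(k)}=0$ over all pieces) does show that if every component of $\mathrm{Cay}(\fG,S_j)\setminus\A_j^{wall}$ has $\det$-image of size at most $t$, then no walk with $\omega(A)=1$ and length $\ge t/3$ can avoid $\A_j^{wall}$. The gap is that this reduction is too strong to yield the stated bound $|\A_j^{wall}|=O(n^2t^{-1/2})$. The generators $D_j$ are all ratios of the ten numbers $\tilde T=\{\det g_{ij}(A):i<j\}$, and since $A\in\A^{long}$ there are no multiplicative relations of length $\le t$ among these; hence up to radius $t$ the Cayley graph $\mathrm{Cay}(\FF_p^\times,D_j)$ is a copy of the $9$-dimensional lattice $\{\mu\in\mathbb Z^{10}:\sum\mu_a=0\}$ with its root generators $\pm(e_a-e_b)$. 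Isoperimetry in a $9$-dimensional lattice forces any $Z_j$ whose removal leaves only components of size $\le t$ to satisfy $|Z_j|\gtrsim(p-1)t^{-1/9}$, never $O((p-1)t^{-1/2})$. Your ``multi-scale interval'' plan treats $\mathbb Z/(p-1)$ as one-dimensional with a handful of jump sizes; the actual geometry here is $9$-dimensional, and no arrangement of thin walls on a circle can deliver the exponent $1/2$.

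The paper obtains $t^{-1/2}$ by abandoning the small-component requirement and tracking dependence-\emph{length} rather than vertex count. It takes the box $B_\lambda=\{\pm\prod_{a\in\tilde T}a^{\lambda_a}:|\lambda_a|\le\lambda,\ \sum\lambda_a=0\}$ with $\lambda=t/1000$; the hypothesis $A\in\A^{long}$ is invoked precisely to show all such products are distinct (Claim~\ref{box}), whence $|B_\lambda|\ge(2\lambda)^{10}$ while $|\partial B_\lambda|=O(\lambda^{9})$. Covering $\FF_p^\times$ by about $p/\lambda^{9.5}$ random translates then gives $|\partial C\cup R|=O(p/\sqrt\lambda)$ and hence $|\A_j^{wall}|=O(n^2t^{-1/2})$. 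A single box now contains $\sim\lambda^{10}\gg t$ vertices, but any two of its points differ by an element of dependence-length at most $10\lambda$; since lengths change by at most $2$ per step, some partial product $\omega_k$ has length in $(10\lambda,t/60]$, and uniqueness of representations in $B_{t/30}$ forces $\omega_k(A)\notin B_{2\lambda}$, so the walk cannot remain in one translate of $B_\lambda$ and must meet $\partial C\cup R$. In short: use the full $10$-dimensional box in the $\tilde T$-coordinates, and replace ``component has $\le t$ vertices'' by ``points in one box are $\le 10\lambda$ apart in dependence-length''.
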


\begin{proof}
Take a representative $A \in \A_j$, consider two sets $T = \{ \det g_{ij}(A)~|~i\neq j\}$ and $\tilde T = \{ \det g_{ij}(A)~|~i < j\}$ both lying in $\FF_p$. Note that $|T| = 20$ and $|\tilde T| = 10$ because $A \in \A_{ord}^{long}$. 
For a positive integer $\lambda$ let us define a box $B_\lambda$ and the boundary of the box $\partial B_\lambda$ as follows:

\begin{eqnarray*}
B_\lambda = \left \{ \pm \prod_{a \in \tilde T} a^{\lambda_a}~|~ \lambda_a\in [-\lambda, \lambda], ~\sum_{a \in \tilde T} \lambda_a = 0 \right \}, \\
\partial B_{\lambda} = \left \{ \pm \prod_{a \in \tilde T} a^{\lambda_a}~|~ \lambda_a\in [-\lambda, \lambda],~\sum_{a \in \tilde T} \lambda_a = 0, ~ \exists b:~\lambda_b = \pm \lambda \right \}.
\end{eqnarray*}

Here are some properties of these objects:

\begin{claim}\label{box} If $\lambda \le t/30$ then:

1. $|B_\lambda| \ge (2\lambda)^{10}$ and $|\partial B_\lambda| \le 2^{16} \cdot \lambda^9$.

2. All products of the form $\pm \prod_{a \in \tilde T} a^{\lambda_a}$ where $\lambda_a\in [-\lambda, \lambda], ~\sum_{a \in \tilde T} \lambda_a = 0$ are distinct. 
\end{claim}

\begin{proof} Note that the bound on $|\partial B_\lambda|$ is obvious and the bound on $|B_\lambda|$ is an immediate consequence of the Part 2. 

Suppose that two different products of the given form do coincide. Bringing everything on the left hand side we obtain an equality $\pm \prod_{a \in \tilde T} a^{\lambda_a} = 1$  where $\lambda_a \in [-2\lambda, 2\lambda]$ and $\sum \lambda_a = 0$. Expressing this in terms of $g_{ij}$ we obtain an equation $\pm \prod_{i > j} \det(g_{ij})^{\lambda_{ij}}(A)=1$ where $\lambda_{ij}$ obey the same conditions. Construct a dependence $\omega$ in the following way: let $\omega = (\lambda_{ij})$ if the sign before the product is positive and let $\omega = (\lambda_{ij}) \cdot \det(g_{12}g_{21}^{-1})$ if the sign is negative.

We see that $\omega(A)=1$ and $\omega$ has length at most $10 \cdot 2\lambda + 2 < t$. Since $A \in \A_{ord}^{long}$ we deduce that $\omega$ is trivial, and so each $\lambda_{ij} = 0$, a contradiction because initially we chose two different products (note that we considered only $\lambda_{ij}$ with $i < j$).

\end{proof}
Now we fix $\lambda = t / 1000$ and $q = \frac{p}{\lambda^{9.5}}$. Choose independently at random $q$ nonzero residues $\rho_1, \ldots, \rho_q \in \FF_p^{\times} = \FF_p \setminus 0$ and consider random sets
$$
C = \bigcup_{i = i}^{q} \rho_i \cdot B_\lambda, ~~ \partial C = \bigcup_{i = i}^{q} \rho_i \cdot \partial B_\lambda, ~~ R = \FF_p^\times \setminus C.
$$
Let us bound their cardinalities. Every residue does not belong to $\rho_i B_\lambda$ with probability $1- \frac{|B_\lambda|}{p-1}$ so
$$
\mathbb E|R| \le p \left ( 1 - \frac{|B_\lambda|}{p-1} \right)^q  < p  \left ( 1 - \frac{(2\lambda)^{10}}{p-1} \right)^q < p e^{- \frac{q\lambda^{10}}{p}} = O(p e^{-\sqrt{\lambda}}).
$$
Next, $|\partial C| \le q|\partial B_\lambda| = O(q \lambda^9) = O(\frac{p}{\sqrt{\lambda}})$. Therefore, there is a choice of $\rho_i$-s so that $|\partial C \cup R| = O(p \lambda^{-0.5})$. Let $\A_j^{wall}$ be the set of all $hA \in \A_j$ such that $\det h \in \partial C \cup R$. We claim that this is the right choice of $\A_j^{wall}$.

First, it is straightforward that $|\A_j^{wall}| = O(n^2 t^{-0.5})$. Next, take an arbitrary $hA \in \A_j$ and elements $g_1, \ldots, g_m \in E$ such that $\omega = \det(g_1 \ldots g_m)$ is of length at least $t/3$ and $1=\omega(hA)=\omega(A)$. Define dependencies $\omega_i = \det(g_1 \ldots g_i)$ and note that lengths of $\omega_i$ and $\omega_{i+1}$ differ by at most $2$. So there is $k \in [1, m]$ such that the length of $\omega_k$ lies in the interval $(10\lambda, t/60]$. Thus, by Claim \ref{box} $\omega_k(A) \in B_{t/30}$ and it has a unique product representation, so $\omega_k(A) \not \in B_{2\lambda}$ because otherwise it would have length at most $10 \times 2\lambda / 2$ (the length is the half of the sum of degrees). We conclude that residues $\det h$ and $\omega_k(A)\det h$ can not lie in the same homothetic image of $B_\lambda$. So there exists a maximal number $i$ such that $\omega_m(A)\det h = \det h$ and $\omega_i(A)\det h$ lie in different boxes (unless they do not lie in any box $\rho B_\lambda$ at all, in which case we are done). Suppose that $\det h, \omega_{i+1}(A)\det h \in \rho_f B_\lambda$ but $\omega_{i}(A)\det h \not \in \rho_f B_\lambda$. It follows that $\omega_{i+1}(A)\det h \in \rho_f \partial B_\lambda$ and so $g_{1}\ldots g_{i+1}(hA)hA \in \A_j^{wall}$ but this is what we needed to prove. The claim is proved.

\end{proof}

Now we glue $\A_{ord}^{wall}$ from $\A_j^{wall}$. In view of Claim \ref{boxy}, we only need to ensure that all sets $\A_{ord}^{wall}(\alpha, \beta)$ are small. Let us choose uniformly at random operators $\gamma_j \in \fG$ for $j = 1, \ldots, l$ and consider sets $\mathcal B_j = \gamma_j \A_j^{wall}$ and $\mathcal B = \bigcup \mathcal B_j$. We claim that with high probability we can take $\A_{ord}^{wall} := \mathcal B$, so we only need to prove that $|(\pi \mathcal B)(\alpha, \beta)|$ is small for any pair of linearly independent vectors $\alpha, \beta$. Define $\xi_{\alpha, \beta, j} = |(\pi \mathcal B_j)(\alpha, \beta)|$ and $\xi_{\alpha, \beta} = |(\pi \mathcal B)(\alpha, \beta)|$. Clearly, $\xi_{\alpha, \beta} \le \sum_{j} \xi_{\alpha, \beta, j}$ and for any $j$ $\xi_{\alpha, \beta, j} \le 20$ because there is no two elements $h_1 A, h_2 A \in \A_j$ which contain $\alpha, \beta$ on the same places. Let $Q = \frac{n^2}{\sqrt{\log n}}$. As variables $\xi_{\alpha, \beta, j}$ are independent and probability that $\xi_{\alpha, \beta, j}>0$ is at most $P=\frac{|\mathcal A_j|}{p} = O(\frac{1}{\sqrt{t}})$ we conclude that

$$
\mathbb P(\xi_{\alpha, \beta} > Q) < {l \choose Q/20}P^{Q/20} < 2^l O(t^{-0.5})^{Q/20} < e^{c_1 n^2 - c_2 Q\log n} = O(e^{-cn^2 \sqrt{\log n}}) = O(n^{-10}),
$$
(the condition $\xi_{\alpha, \beta} > Q$ implies that there are at least $Q/20$ nonzero $\xi$-s). Thus, with high probability $\xi_{\alpha, \beta} \le Q$ for all $\alpha, \beta$. The lemma is proved.

\end{proof}
\subsection{Proof of Lemma \ref{lm1}}\label{ss5}

Let us consider the decomposition of $\A^{long}$ into the orbits $\A_1, \ldots, \A_l$ under the action of $\fG$ (note that this is not the same decomposition as in the proof of Lemma \ref{wall} because now we work inside $\A$ instead of $\A_{ord}$). We have $|\A^{long}| \sim \frac{n^4}{120}$ and $\fG \sim n^2$ so $l \sim \frac{n^2}{120}$. Thus, to prove Lemma \ref{lm1} we only need to color each set $\A_j \setminus \A^{wall}$ in $20$ colors.  

\begin{lemma} \label{crucial}
For any $j = 1, \ldots, l$ we have $\chi(G|_{\A_j \setminus \A^{wall}}) \le 20$.
\end{lemma}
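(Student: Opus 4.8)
\textbf{Proof proposal for Lemma \ref{crucial}.}

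The plan is to fix an orbit $\A_j$, pick a representative $A = A_j \in \A_j$, and identify $\A_j$ with the quotient $\fG / \mathrm{Stab}(A)$; since the five vectors of $A$ are pairwise non-collinear the stabilizer is trivial (a linear map fixing two of them as a set is forced, so fixing the whole configuration forces the identity), and so $\A_j \cong \fG$ as a set. As explained in the sketch, the induced subgraph $G|_{\A_j}$ is then the Cayley graph on $\fG$ with connection set $E_j = \{ e(A) : e \in E,\ |A \cap e(A)A| \ge 2 \}$, which has size at most $20 \cdot 19$. What we actually want to color is $G|_{\A_j \setminus \A^{wall}}$, and the key point supplied by Lemma \ref{wall} is that deleting $\A^{wall}$ breaks every ``long'' cycle: any closed walk $g_1 \cdots g_m$ in the Cayley graph with $g_i \in E$ whose associated dependence $\omega = \det(g_1 \cdots g_m)$ has length $\ge t/3$ must pass through a vertex of $\A^{wall}_{ord}$, hence through a vertex of $\A^{wall}$. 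Combined with the fact that $A \in \A^{long}$ — so every \emph{nontrivial} short dependence is nonvanishing on $A$ — this should force the components of $\A_j \setminus \A^{wall}$ to be small and, more importantly, to be colorable in $20$ colors.

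The central step is a local-structure claim: \emph{the girth-type obstruction in each surviving component comes only from trivial dependencies, which are harmless}. Concretely, I would argue that if $g_1, \dots, g_m \in E$ is a closed walk inside a single component of $\A_j \setminus \A^{wall}$ (so it never hits $\A^{wall}$), then the dependence $\omega = \det(g_1 \cdots g_m)$ has length $< t/3$ by Lemma \ref{wall}, and since $\omega(A) = 1$ and $A \in \A^{long}$, Lemma \ref{short}/the definition of $\A^{long}$ forces $\omega$ to be \emph{trivial}; then Lemma \ref{trivial} pins down its structure ($d_{ij} + d_{ji} = 0$ and $\sum_{i<j} d_{ij}$ even). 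This should mean that, for coloring purposes, each component behaves like (a subgraph of) a Cayley graph where the ``relations'' are exactly the trivial ones. The natural coloring is then a homomorphism $\fG \to \mathbb{Z}/20$ (or a product of small cyclic groups) that sends each generator $e(A) \in E_j$ to a nonzero element, so that no edge is monochromatic; the point of excluding short nontrivial dependencies is precisely that no product of generators along an edge-path can collapse in a way that a group coloring would fail to see, except via the trivial relations, and one checks those trivial relations are automatically respected by a well-chosen character. Since $|E_j| \le 20 \cdot 19$ the connected components have bounded size anyway, so even a greedy/explicit assignment of the $20$ colors along a BFS tree of each component — using that every non-tree edge closes a trivial dependence and hence is ``consistent'' — will work.

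So the proof outline is: (1) show $\mathrm{Stab}(A)$ is trivial, so $\A_j \cong \fG$ and $G|_{\A_j}$ is a Cayley graph with connection set of size $\le 20 \cdot 19$; (2) invoke Lemma \ref{wall}(1) to conclude every closed walk avoiding $\A^{wall}$ has short associated dependence; (3) invoke $A \in \A^{long}$ plus Lemma \ref{trivial} to conclude that short dependence is trivial and has the rigid form $d_{ij} = -d_{ji}$; (4) build an explicit $20$-coloring of $\fG$ (e.g.\ via a character $\det(\cdot)$-type map, or via the index of the generator that was applied) that is proper on all Cayley edges corresponding to $E$, using step (3) to verify consistency around trivial cycles; restricting to $\A_j \setminus \A^{wall}$ then gives the bound. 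I expect the main obstacle to be step (4): translating ``the only surviving cycles are trivial dependencies with $d_{ij} + d_{ji} = 0$'' into an \emph{actual} $20$-coloring — i.e.\ exhibiting a consistent labelling on each component rather than merely bounding its chromatic number by its size — and in particular checking that the twenty ``slots'' (pairs $(i,j)$ up to the sign flip, of which there are $\binom{5}{2}\cdot 2 = 20$) can be used as the color set so that applying any generator $e = g_{ij}^{-1} g_{kl}$ always changes the slot. Handling the degenerate interaction with $\A^{short}$ (already removed from $\A_{good}$, but one must make sure its removal doesn't disconnect things in a way that ruins the counting) is the secondary technical point.
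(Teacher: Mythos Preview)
Your outline matches the paper's approach almost exactly: work in one orbit at a time, view it as a Cayley graph on $\fG$ with generators $e(A)$ for $e\in E$, use Lemma~\ref{wall} to force every closed walk inside $\A_j\setminus\A^{wall}$ to carry a dependence of length $<t/3$, and then use $A\in\A^{long}$ together with Lemma~\ref{trivial} to conclude that this dependence is trivial with $d_{ij}+d_{ji}=0$ and $\sum_{i<j}d_{ij}$ even. So steps (1)--(3) are right on target. Two things, however, are off. First, the throwaway remark ``since $|E_j|\le 20\cdot 19$ the connected components have bounded size anyway'' is simply false: bounded degree says nothing about component size, and after removing $\A^{wall}$ a component can still have order $\Theta(n^2)$. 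So the ``greedy/BFS'' fallback is not a safety net---you really do need step~(4).

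Second, step~(4) is exactly what is missing, and your gestures at ``a character $\det(\cdot)$-type map'' or ``the index of the generator'' do not quite pin it down. The paper's resolution is concrete and uses \emph{both} conclusions of Lemma~\ref{trivial}. Write each generator as $g_{ij}(A)^{-1}g_{kl}(A)$ with $g_{ij}(A),g_{kl}(A)\in S(A):=\{g_{ij}(A):i\neq j\}$, choose any bijection $\psi:S(A)\to\mathbb Z/20$ satisfying $\psi(g_{ij}(A))\equiv\psi(g_{ji}(A))+10$, and color a vertex reached from the basepoint by a word $g_1^{-1}f_1\cdots g_q^{-1}f_q$ with $c=\sum_s(\psi(f_s)-\psi(g_s))\bmod 20$. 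An edge multiplies by a single $g^{-1}f$ with $g\neq f$, hence changes $c$ by $\psi(f)-\psi(g)\not\equiv 0$, so the coloring is proper \emph{once it is well-defined}. Well-definedness is exactly the cycle computation: a closed walk gives a trivial dependence $(d_{ij})$, and then
\[
\sum_{i\neq j} d_{ij}\,\psi(g_{ij}) \;=\; \sum_{i<j}\bigl(d_{ij}+d_{ji}\bigr)\psi(g_{ji}) \;+\; 10\sum_{i<j} d_{ij} \;\equiv\; 0 \pmod{20},
\]
using $d_{ij}+d_{ji}=0$ for the first sum and the \emph{evenness} of $\sum_{i<j}d_{ij}$ for the second. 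That parity condition is the piece your sketch does not invoke, and without it the ``twenty slots'' idea does not close up around cycles.
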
  

\begin{proof}
Let us fix $\A_j$ and its representative $A \in \A_j$ and $A_{ord} \in \pi^{-1}A$. Two vertices $gA$ and $hA$ are adjacent in $G$ if and only if $|gA \cap hA| \ge 2$ or equivalently $|g^{-1}hA \cap A| \ge 2$, that is, some two elements of $A$ are mapped by $g^{-1}h$ into other two elements of $A$. By definition, this means that $g^{-1}h = e(A_{ord})$ for some $e \in E$ (see Section \ref{ss2}), that is $h = g e(A_{ord})$. The induced subgraph $G|_{\A_j \setminus \A^{wall}}$ splits into connected components $\mathcal C_1, \ldots, \mathcal C_m$. Take a representative $h_i A$ from the component $\mathcal C_i$. 

Every vertex $hA \in \mathcal C_i$ has a representation 
\begin{equation}\label{rep}
hA = h_i g_1 f_1^{-1} g_2 f_2^{-1} \ldots g_q f_q^{-1} A,
\end{equation}
where $g_s, f_s \in S(A) := \{ g_{ij}(A_{ord}) \}$ and for all $s$ the vertex $h_i g_1 f_1^{-1} \ldots g_s f_s^{-1} A$ lies in $\mathcal C_i$. Consider an arbitrary bijection $\psi: S(A) \rightarrow \mathbb Z_{20}$ such that $\psi(g_{ij}(A)) =  \psi(g_{ji}(A))+10 \pmod{20}$. Now we define a coloring $c$ of $\A_j \setminus \A^{wall}$ as follows: 
$$
c(h) = \sum_{s=1}^{q} \psi(g_s) - \psi(f_s) \pmod{20}.
$$
Let us suppose for a moment that this definition does not depend on the choice of the representation (\ref{rep}) of $hA$. Then we can write any two adjacent vertices $hA, h'A$ in the form:
\begin{eqnarray*}
hA = h_i g_1 f_1^{-1} g_2 f_2^{-1} \ldots g_q f_q^{-1} A \\
h'A = h_i g_1 f_1^{-1} g_2 f_2^{-1} \ldots g_q f_q^{-1} g'(f')^{-1} A,
\end{eqnarray*}
and so $c(h') - c(h) = \psi(g') - \psi(f') \neq 0 \pmod{20}$ that is the coloring $c$ is proper. 

So it remains to check correctness of the definition of the coloring $c$. Take a vertex $h A \in \mathcal C_i$ and two its representations of the from (\ref{rep}). Bringing everything to the left hand side we obtain
$$
h_i x_1y_1^{-1} x_2y_2^{-1} \ldots x_u y_u^{-1} A = h_iA,
$$
for some $x_s, y_s \in S(A)$ and we need to prove that $\sum_{s=1}^{u} \psi(x_s) - \psi(y_s) = 0 \pmod{20}$. Note that for any $s$ the vertex $h_i x_1 y_1^{-1} \ldots x_s y_s^{-1} A$ has to lie inside $\mathcal C_i$. We can write each multiple $x_sy_s^{-1}$ as $e_s (A_{ord})$ for some $e_s \in E$. Consider the dependence $\omega = \det(e_1 \ldots e_u) =: (d_{ij})$. By definition, $\omega(A_{ord}) = \prod \det(x_sy_s^{-1}) = 1$ and $\sum_{s=1}^{u} \psi(x_s) - \psi(y_s) = \sum_{i \neq j} d_{ij} \psi(g_{ij}) \pmod{20}$. 

Suppose that the length of $\omega$ is at most $t$. Then $\omega$ is trivial because $A_{ord} \in \A^{long}_{ord}$. Then Lemma \ref{trivial} applies and we obtain:
$$
\sum_{i \neq j} d_{ij} \psi(g_{ij}) = \sum_{i < j} d_{ij}\psi(g_{ij}) + d_{ji} \psi(g_{ji}) = \sum_{i < j}\psi(g_{ji}) (d_{ji} + d_{ij}) + 10 \cdot d_{ij} = 10 \sum_{i < j} d_{ij} = 0 \pmod{20}
$$
and we are done. 

Now suppose that the length of $\omega$ is at least $t > t/3$. Then Lemma \ref{wall} applied to the sequence $e_1, \ldots, e_u$ and $h_i A_{ord}$ yields that there exists $s$ such that $h_i e_1 \ldots e_s(A_{ord}) A_{ord} \in \A^{wall}_{ord}$. But this means that $h_i x_1 y_1^{-1} \ldots x_s y_s^{-1}  A \not \in \mathcal C_i$ because $\mathcal C_i \cap \A^{wall} = \emptyset$. We arrived at a contradiction, Lemma \ref{crucial} is proved.
\end{proof}

\subsection{Proof of Lemma \ref{lm2}}\label{ss6}

We should color all $4$-element subsets which are not subsets of any element of $\A_{good}$. Take an arbitrary $X=\{x_1, x_2, x_3, x_4\} \subset V\setminus 0$ and denote $A = X \cup \{ -x_1 -x_2-x_3-x_4\}$. Let $U_1$ be the set of all $4$-element sets $X$ such that $A \not \in \A$, that is a pair of elements of $A$ is collinear (this includes the cases then the sum of $x_i$-s equals $0$). Finally, let $U_2$ be the set of all $4$-element sets $X$ such that $A \in \A^{short}\cup \A^{wall}$. Clearly, $U=U_1 \cup U_2$ and we need to show that $\chi(G(n, 4, 2)|_{U_i}) = o(n^2)$ for $i =1, 2$. Note that in this section we are working with $4$-element subsets of $V \setminus 0$ and, in particular, $U_1, U_2 \subset {V \setminus 0 \choose 4}$. 

Let us begin with $U_2$, we will deduce the desired bound using the inequality $\chi(G(n, 4, 2)|_{U_2}) \le \Delta(G(n, 4, 2)|_{U_2})+1$. Take a vertex $X\in U_2$, by Lemmas \ref{short} and \ref{wall} there are $o(n^2)$ sets from $\A^{wall} \cup \A^{short}$ intersecting $X$ in at least two places. Thus, the maximal degree of considered induced subgraph is $o(n^2)$. 

Now we focus on $U_1$. 

Recall that the list chromatic number $\chi_{list}(H)$ of a graph $H$ is the smallest number $k$ such that the following holds. For each assignment of sets $L(v), v \in V(H)$ of cardinality at least $k$ there is a proper coloring $c$ of $H$ such that $c(v) \in L(v)$ for any $v \in V(H)$. We need the following general result.

\begin{claim}\label{general}
Let $H$ be a graph. Suppose that the vertex set of $H$ is covered by a system of subsets: $V(H) = \bigcup_{i=1}^{m} A_i$. Suppose that for any $v \in V(H)$ the number of edges between $v$ and the set $\bigcup_{A_i \not \ni v} A_i$ is at most $d$. Suppose that for any $i$ $\chi_{list}(H|_{A_i}) \le l$ holds. Then $\chi(G) \le l+d$.  

\end{claim}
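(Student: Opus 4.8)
\textbf{Proof plan for Claim \ref{general}.}

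The plan is to build the coloring greedily, one block $A_i$ at a time, using the list chromatic hypothesis within each block to absorb the ``external'' constraints coming from already-colored vertices. Concretely, I would process the blocks $A_1, A_2, \ldots, A_m$ in order. When it is time to color $A_i$, some of its vertices may already have received colors (because they lie in earlier blocks $A_{i'}$, $i'<i$); leave those fixed. For each still-uncolored vertex $v \in A_i$, look at the set of colors already used on neighbors of $v$ that lie \emph{outside} $A_i$, i.e.\ in $\bigcup_{A_j \not\ni v} A_j$. By hypothesis there are at most $d$ such edges, so at most $d$ forbidden colors arise this way. Hence, from the palette $\{1, \ldots, l+d\}$, the list $L(v)$ of colors not forbidden by an external neighbor has size at least $l$.

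Next I would apply the list-coloring hypothesis to the induced subgraph $H|_{A_i}$ with these lists $L(v)$ (of size $\ge l$) on the uncolored vertices. There is one wrinkle: some vertices of $A_i$ are already colored, so what we really need is a list coloring of $H|_{A_i}$ that \emph{extends} the partial coloring on the already-colored part. This is handled by shrinking lists: for an already-colored vertex $u \in A_i$ with color $c(u)$, set $L(u) = \{c(u)\}$, and for each uncolored neighbor $v$ of $u$ inside $A_i$, delete $c(u)$ from $L(v)$ if present. A cleaner way to phrase it, avoiding the lists-of-size-one issue: delete the already-colored vertices of $A_i$ from the graph, and instead add their colors to the forbidden sets of their uncolored neighbors in $A_i$; but then a vertex $v \in A_i$ with many already-colored neighbors inside $A_i$ could lose too many colors. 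To avoid this we should instead observe that an edge from $v$ to an already-colored vertex $u \in A_i$ is itself an edge ``between $v$ and $\bigcup_{A_j \not\ni u} A_j$'' counted in the degree bound for $u$ at the time $u$ was colored — so it is not double counting to simply fold these into the $d$ bound as well, provided we set up the induction so that, at the moment we color $A_i$, every uncolored $v \in A_i$ has at most $d$ colored neighbors \emph{in total} (inside or outside $A_i$ does not matter). Re-examining the hypothesis, the count of $d$ external edges per vertex is exactly what bounds the number of colored neighbors a vertex can have the first time its block is processed, because all of a vertex's neighbors within blocks containing it that are processed earlier are exactly external-from-those-blocks' perspective; so the correct bookkeeping is: when $A_i$ is processed, each uncolored $v\in A_i$ has at most $d$ already-colored neighbors, hence a surviving list of size $\ge l$, hence $H|_{A_i}$ restricted to its uncolored vertices can be properly colored from these lists by $\chi_{list}(H|_{A_i}) \le l$.

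Finally, I would check that the resulting full coloring of $V(H) = \bigcup A_i$ is proper: any edge $uv$ lies in some block (if $u,v$ share a block, the list coloring of that block made it non-monochromatic) or in no common block, in which case one endpoint was colored strictly before the other and the later one explicitly avoided the earlier one's color via the forbidden-set mechanism. Hence $\chi(H) \le l + d$, and the claim (with $G = H$) follows. The main obstacle, and the only delicate point, is precisely the bookkeeping in the previous paragraph: one must argue that the hypothesis ``at most $d$ edges from $v$ to $\bigcup_{A_j \not\ni v} A_j$'' really does bound the number of \emph{already-colored} neighbors of $v$ at the moment $v$'s first block is processed — equivalently, that every neighbor of $v$ colored before $v$ lies outside at least one block containing $v$, which is immediate since $v$ is uncolored until that moment so no block containing $v$ has been fully processed, meaning any earlier-colored neighbor $w$ of $v$ got its color from a block not containing $v$. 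Once that is spelled out, the rest is the routine greedy-plus-list-coloring argument above.
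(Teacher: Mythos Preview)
Your proposal is correct and follows essentially the same inductive/greedy approach as the paper: process the blocks in order, and at stage $i+1$ list-color the still-uncolored part $C = A_{i+1} \setminus (A_1 \cup \ldots \cup A_i)$ from lists of size $\ge l$ obtained by deleting the colors of already-colored neighbors. The paper's phrasing avoids your middle-paragraph bookkeeping detour by observing directly that for $v \in C$ none of $A_1,\ldots,A_i$ contain $v$, so $A_1 \cup \ldots \cup A_i \subset \bigcup_{A_j \not\ni v} A_j$ and the hypothesis immediately bounds the number of colored neighbors by $d$; then one list-colors $H|_C$, which inherits $\chi_{list}\le l$ from $H|_{A_{i+1}}$.
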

\begin{proof}
Suppose that we have already colored $B=A_1 \cup \ldots \cup A_i$ in at most $l+d$ colors. Let us show that the set $C = A_{i+1} \setminus B$ can also be colored in at most $l+d$ colors. 
For a vertex $v \in C$ let $L(v)$ be the list of colors in which $v$ can be colored without contradicting the coloring of $B$. By assumption, $v$ has at most $d$ neighbours in $B$, therefore, $|L(v)| \ge l$. 
Since $C \subset A_{i+1}$, we can pick a color from each $L(v)$ so that the resulting coloring of $C \cup B = A_1 \cup \ldots \cup A_{i+1}$ is proper. The claim now follows by induction.
\end{proof}

To apply Claim \ref{general} we construct a covering system of $U_1$ as follows. Let $U^*$ be the set of quadruples $\{x_1, \ldots, x_4\}$ for which $-x_1-x_2-x_3-x_4$ either equals $0$ or is proportional to $x_i$ for some $i$. For a pair of collinear nonzero vectors $\alpha, \beta$ let $U(\alpha, \beta)$ be the set of all $X \in U_1$ containing $\alpha$ and $\beta$. It follows that $U_1 = U^* \cup \bigcup_{\alpha \sim \beta} U(\alpha, \beta)$ (here $\alpha \sim \beta$ means that $\alpha$ and $\beta$ are collinear). Indeed, if $X = \{x_1, \ldots, x_4\} \in U_1$ then either $x_i \sim x_j$ for some $i \neq j$, which means that $X \in U(x_i, x_j)$, either $x_i \sim -x_1-x_2-x_3-x_4$ for some $i$, which means that $X \in U^*$.
Let $d_X$ be the number of edges between $X$ and the subset $\bigcup P$ of $U_1$, where the union is taken over $P \in \{ U^*,  U_{\alpha, \beta}\}$ such that $X \not \in P$.


\begin{claim}
$d_X = O(n^{3/2})$ for any $X \in U_1$.
\end{claim}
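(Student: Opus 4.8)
The plan is to work directly with the covering system $U_1 = U^* \cup \bigcup_{\alpha \sim \beta} U(\alpha, \beta)$. Since $n = p^2 - 1$ we have $p = \Theta(\sqrt n)$, so it suffices to produce a bound of $O(p^3)$. Fix $X = \{x_1, x_2, x_3, x_4\} \in U_1$. Every $Y$ joined to $X$ by an edge of $G(n,4,2)$ meets $X$ in exactly two vectors, so $Y = \{x_i, x_j, y_1, y_2\}$, where $\{x_i, x_j\}$ is one of the $\binom{4}{2} = 6$ two-element subsets of $X$ and $y_1 \ne y_2$ lie in $(V \setminus 0) \setminus X$. Hence it is enough to bound, for each fixed pair $\{x_i, x_j\}$, the number of admissible unordered pairs $\{y_1, y_2\}$ with $Y \in \mathcal U(X)$ by $O(p^3)$.

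First I would record what $Y \in \mathcal U(X)$ forces. If $Y \in U(\gamma, \delta)$ for a collinear pair $\{\gamma, \delta\}$ with $X \notin U(\gamma, \delta)$, then $\{\gamma, \delta\} \subseteq Y$ and $\{\gamma, \delta\} \ne \{x_i, x_j\}$ (that pair lies in $X$, so it would force $X \in U(\gamma, \delta)$), hence $\{\gamma, \delta\}$ meets $\{y_1, y_2\}$; consequently at least one of the five collinearity relations $x_i \sim y_1$, $x_i \sim y_2$, $x_j \sim y_1$, $x_j \sim y_2$, $y_1 \sim y_2$ holds. If instead $Y \in U^*$, then the vector $w := -(x_i + x_j + y_1 + y_2)$ is $0$ or proportional to one of $x_i, x_j, y_1, y_2$. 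Thus every admissible $\{y_1, y_2\}$ satisfies one of a list of explicit algebraic conditions whose number does not depend on $n$, and it remains to bound the number of solutions of each.

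The count is then routine. A relation $x_i \sim y_k$, or $y_1 \sim y_2$, confines one of $y_1, y_2$ to a fixed line through the origin ($\le p$ choices) and leaves the other coordinate free ($\le p^2$ choices): $O(p^3)$ pairs each. For $Y \in U^*$: the equation $w = 0$ reads $y_1 + y_2 = -(x_i + x_j)$ and has $O(p^2)$ solutions; ``$w$ proportional to $x_i$'' reads $x_i + x_j + y_1 + y_2 = t x_i$, so each choice of $y_1$ and scalar $t$ determines $y_2$, giving $O(p^3)$ pairs, and likewise for $x_j$; ``$w$ proportional to $y_1$'' reads $x_i + x_j + y_1 + y_2 = t y_1$, so $y_2$ is again determined by $y_1$ and $t$, giving $O(p^3)$, and symmetrically for $y_2$. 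Summing over the constantly many conditions and the six choices of $\{x_i, x_j\}$ gives $d_X = O(p^3) = O(n^{3/2})$. I do not expect a genuine obstacle; the only point needing care is the bookkeeping in the middle step — checking that every way in which $Y$ can land in $\mathcal U(X)$ has been accounted for, and in particular that a collinear pair of $Y$ that happens to be contained in $X$ contributes nothing, which is exactly what the hypothesis $X \notin U(\gamma,\delta)$ guarantees.
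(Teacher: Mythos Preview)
Your proof is correct and follows essentially the same approach as the paper: fix the two-element intersection $Z = X \cap Y$ in one of $6$ ways, then separately bound the number of $Y \supset Z$ in $U^*$ and the number in some $U(\alpha,\beta)$ with $\{\alpha,\beta\} \not\subset X$, in each case by $O(np) = O(p^3) = O(n^{3/2})$. Your write-up simply makes the case analysis (the five collinearity relations and the five possibilities for $w$) more explicit than the paper's terse count.
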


\begin{proof}
We count the number of neighbors $Y$ of $X$ in $\bigcup P$. The intersection $Z=X\cap Y$ can be fixed in $6$ ways. There are at most $4np$ sets $Y \in U^*$ containing $Z$. If $Y \in U(\alpha, \beta)$ for $\{\alpha, \beta\} \not \subset X$ then we can choose two last elements of $Y$ in at most $np$ ways. So, altogether, there are at most $O(n^{3/2})$ neighbors of $X$.

\end{proof}

Using a similar argument one can prove that $\Delta(G(n, 4, 2)|_{U^*}) = O(n)$ and, by the trivial inequality $\chi_{list}(H) \le \Delta(H)+1$, we obtain $\chi_{list}(G(n, 4, 2)|_{U^*}) = O(n)$. Now we bound the list chromatic number of the graph induced on $U(\alpha, \beta)$. Clearly, this subgraph is isomorphic to a subgraph of $G(n, 2, 0)$. So it remains to bound the list chromatic number of the latter graph. We will prove a slightly more general result.
\begin{lemma} \label{list} 
Fix $r, s$ and  let $n \rightarrow \infty$, then 
$\chi_{list}(G(n, r, s)) = O(n^{s+1} \log n) $.
\end{lemma}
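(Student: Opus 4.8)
The plan is to handle two regimes. If $r\le 2s+1$, then $r-s\le s+1$, so the maximum degree $\Delta=\Delta(G(n,r,s))=\binom{r}{s}\binom{n-r}{r-s}=\Theta(n^{r-s})$ already satisfies $\Delta+1=O(n^{s+1})$, and the trivial bound $\chi_{list}\le\Delta+1$ gives the claim. So I would assume $r\ge 2s+2$ (in particular $r\ge s+2$), and prove the bound by a short one‑shot random construction, exploiting that every \emph{star}---the family of all $r$-sets containing a fixed $(s+1)$-set $T\subseteq[n]$---is an independent set in $G(n,r,s)$, and that there are $\binom{n}{s+1}=\Theta(n^{s+1})$ such stars.

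Set $k=\lceil C\,n^{s+1}\log n\rceil$ for a large constant $C=C(r,s)$ to be fixed later, let arbitrary lists $L(v)$ with $|L(v)|=k$ be given for all vertices $v\in\binom{[n]}{r}$, and let $\mathcal C=\bigcup_v L(v)$ be the set of colors that occur. To each color $\kappa\in\mathcal C$ I would assign, independently and uniformly at random, an $(s+1)$-subset $T_\kappa\subseteq[n]$. I would then color a vertex $v$ by setting $c(v)=\kappa$ for some $\kappa\in L(v)$ with $T_\kappa\subseteq v$ whenever such a $\kappa$ exists, and call $v$ \emph{bad} otherwise. Any coloring obtained this way is automatically proper: if $c(u)=c(v)=\kappa$ for an edge $uv$, then $T_\kappa\subseteq u$ and $T_\kappa\subseteq v$, hence $T_\kappa\subseteq u\cap v$, contradicting $|T_\kappa|=s+1>s=|u\cap v|$.

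So it suffices to show that with positive probability no vertex is bad. For a fixed $v$ the events $\{T_\kappa\subseteq v\}$ over the $k$ colors $\kappa\in L(v)$ are mutually independent, each of probability $q=\binom{r}{s+1}/\binom{n}{s+1}\ge c_1 n^{-(s+1)}$ for some constant $c_1=c_1(r,s)>0$, so
$$\mathbb P(v\text{ is bad})=(1-q)^{k}\le e^{-qk}\le e^{-c_1 C\log n}=n^{-c_1 C}.$$
Taking a union bound over the $\binom{n}{r}\le n^{r}$ vertices gives $\mathbb P(\text{some vertex is bad})\le n^{\,r-c_1 C}<1$ as soon as $C>r/c_1$, so a suitable family $(T_\kappa)_{\kappa\in\mathcal C}$ exists and yields a proper list coloring with $k=O(n^{s+1}\log n)$ colors.

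There is no real difficulty here beyond the bookkeeping; the one conceptual point is to color through random $(s+1)$-sets attached to the \emph{colors}---so that the star structure does the separating work---rather than assigning colors to vertices independently, which (e.g.\ via the Lov\'asz Local Lemma) would only reach $O(n^{r-s})$, insufficient precisely when $r>2s+1$. The $\log n$ factor is intrinsic to this coupon-collector-type union bound.
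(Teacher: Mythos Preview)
Your argument is correct and is essentially the same as the paper's: assign to each color a uniformly random $(s+1)$-subset of $[n]$, colour each vertex by any list-colour whose assigned subset lies inside it, and use a union bound over the $\binom{n}{r}$ vertices to show every vertex receives a colour. The paper carries this out uniformly in $r,s$; your preliminary case split $r\le 2s+1$ via $\chi_{list}\le\Delta+1$ is unnecessary, since the star argument (two vertices sharing a colour must share an $(s+1)$-set and hence are non-adjacent) works for all $r>s$.
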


\begin{proof}
The assertion follows immediately from the standard fact that $\chi_{list}(G) \le \chi(G) \log |G|$ for any graph $G$.



\end{proof}

Thus, we checked assumptions of Claim \ref{general} with $d, l = O(n^{3/2})$. Therefore, $\chi(G(n, 4, 2)|_{U_1}) = o(n^2)$.

\section{Remarks} \label{remarks}

In this Section we very briefly discuss the limitations of the presented techniques. 

Unfortunately, the presented approach depends heavily on the particular choice of the parameters $(r, s) = (4, 2)$ and it is not completely clear how one can extend it to larger values of $r, s$ or to other families of graphs. 

In particular, if one tries to generalize the method to an arbitrary graph $G(n, r, s)$ with $r \le 2s+1$ one faces the following problems:

{\bf 1.} What should replace the set $\A$, that is an approximate design which is invariant under an action of a large group (more precisely, a group of large transitivity, for instance, $GL_t(\FF_p)$)? It is only straightforward to come up with such a family in the case then $s = r-2$: identify $[n]$ with $\FF_p^s \setminus 0$ and consider a family of sets which elements sum up to $0$. The action of the general linear group $GL_s(\FF_p)$ will preserve this family.

{\bf 2.} During the proof, it was essential to work with determinants of linear operators instead of operators themselves. For instance, it allowed us to use polynomial methods in Section \ref{ss3}, 
the commutativity of $\FF_p^\times$ allowed us to construct the ``wall" in Section \ref{ss4}: indeed, the key observation was that the size of the boundary of a box is much smaller than the size of the box itself. It is completely false if one tries to apply this idea to $GL_2(\FF_p)$ directly. But for $s \ge 3$ the determinant is not strong enough to capture all the edges of the graph: the set $E$ of linear operators in $\FF_p^s$ which map a fixed set $A$ to a set which intersects $A$ in at least $s$ elements always contains operators of determinant 1, unless $s = 2$. 


{\bf 3.} Splitting the graph into ``structured" (4-element subsets of elements of $\A^{good}$ in our case) and  ``degenerate" (the set $U$ in our case) parts will also become harder. Consider, for instance, the graph $G = G(n, r, r-2)$. We would like to prove that $\chi(G(n, r, r-2)) \sim \frac{n^2}{6}$. But note that $G(n, r, r-2)$ contains a lot of copies of the graph $G(n-r+3, 3, 1)$ which has almost the same chromatic number and has only $O(n^3)$ vertices. This means that the ``degenerate" part of the graph should be very carefully defined so that it will not accidentally contain a copy of $G(n-r+3, 3, 1)$ inside. Note that the case $(r, s) = (4, 2)$ the set $U$ contains many subgraphs isomorphic to $G(n, 2, 0)$ but, luckily for us, the chromatic number of the latter graph grows linearly. 

The most natural candidates for a future generalization of the approach are the graphs $G(n, 5, 2)$ and $G(n, 5, 3)$ both of which represent some of the new difficulties mentioned above. Also one may consider a simpler sequence of graphs, namely graphs $G(n, r, \ge s)$ which have the same sets of vertices as $G(n, r, s)$ but two vertices of $G(n, r, \ge s)$ are connected if their intersection contains {\it at least} $s$ elements. This simplification eliminates the need of approximate designs in the coloring.

\end{document}